\newcommand{\ee}{{\mathrm e}}
\newcommand{\ii}{{\mathrm i}}
\newcommand{\ad}{\mathrm{ad}\,}
\definecolor{darkgreen}{rgb}{0.0, 0.42, 0.24}
\newtheorem{lem}{Lemma}[section]
\newtheorem{thm}{Theorem}[section]
\newtheorem{remark}{Remark}[section]
\newtheorem{defi}{Definition}[section]
\newtheorem{cor}{Corollary}[section]
\newtheorem{exam}{Example}[section]
\newtheorem{assumption}{Assumption}
\begin{document}

\title{Asymptotic expansions for the linear PDEs with oscillatory input terms;\\
 Analytical form and error analysis} 
\author{
Karolina Kropielnicka\\
Institute of Mathematics\\
Polish Academy of Sciences\\
A. Abrahama 18, 81-825 Sopot\\
Poland
 \and
Rafa{\l}  Perczy\'{n}ski\\
Institute of Mathematics\\
University of Gda\'{n}sk\\
W. Stwosza 57, 80-308 Gda\'{n}sk\\
Poland
}
                                                                                                                                                                                                                                                                                       
\thispagestyle{empty}
\maketitle

\begin{abstract}
Partial differential equations with highly oscillatory input terms are hardly ever solvable analytically and their  numerical treatment is difficult. Modulated Fourier expansion used as an {\it ansatz} is a well known and extensively investigated tool in asymptotic numerical approach for this kind of problems. Although the efficiency of this approach has been recognised, its error analysis has not been investigated rigorously for general forms of linear PDEs. In this paper, we start such kind of investigations for a general form of linear PDEs with an input term characterised by a single high frequency. More precisely we derive an analytical form of such an expansion and provide a formula for the error of its truncation. Theoretical investigations are illustrated by computational simulations.
\end{abstract}

\section{Introduction}

In this paper we are concerned with the linear PDEs featured by highly oscillatory input term $f(x,t)$:
\begin{align}
	\label{eq:1.5}
	&\partial_tu(x,t)=\mathcal{L}u(x,t)+f(x,t)u(x,t),\qquad t\in [0,t^\star],\ x\in\Omega\subset\mathbb{R}^m,\\ \nonumber
	&u(x,0)=u_0(x),
\end{align}
where $\Omega$ is a bounded subset of $\mathbb{R}^m$ with smooth boundary $\partial \Omega$ while $\mathcal{L}$ is a differential operator of degree $p$ acting  in the following way 
\begin{align} \label{diff_operator}
	\mathcal{L}v(x) = \sum_{|\textbf{p}|\leq p}^{}a_{\textbf{p}}(x)\frac{\partial^{|\mathbf{p}|}v(x)}{\partial x_1^{p_1}\dots\partial x_m^{p_m}},  \quad \mathbf{p} = (p_1,\dots, p_m), \ p> 0,\ \quad  x\in \Omega.
\end{align}

The type of the boundary conditions does not need to be specified for the purpose of this paper and depends on the applications. We require, however,  the whole initial boundary problem of considerations to be well-posed. The main assumption is the oscillatory in time nature of the input term, more precisely in this manuscript we assume that
\begin{equation}\label{function_f}
	f(x,t) =\alpha(x)\ee^{\ii\omega t}, \quad \omega \gg 1,
\end{equation}
where parameter $\alpha(x)$ is independent of $\omega$.
Let us observe that the presented theory is also applicable to function $f(x,t) =\beta(x)+\alpha(x)\ee^{\ii\omega t}$, which can be obtained by incorporating (also independent of $\omega$) $\beta(x)$ into the linear operator $\mathcal{L}$.

The aim of this paper is to express the solution of (\ref{eq:1.5}) as an asymptotic series 
\begin{equation}\label{expansion}
	u(x,t) \thicksim p_{0,0}(x,t) +  \sum_{r=1}^{\infty}\frac{1}{\omega^r}\sum_{n=0}^{\infty}p_{r,n}(x,t)\ee^{\ii n\omega t},
\end{equation}
which is also known as {\em modulated Fourier expansion}.
In this paper we make the first step towards the derivation  of an asymptotic expansion of the solution of (\ref{eq:1.5}) for a more general highly oscillatory function $f(x,t)$ of the following form
\begin{equation}\label{ideal_function_f}
	f(x,t) = \sum_{n=-N}^{N}\alpha_n(x,t)\ee^{\ii \omega_n t}, \quad |\omega_n| \gg 1, \quad N \in \mathbb{N},
\end{equation}
where $\alpha_n(x,t)$ are independent of $\omega_m$, $n,m\in\mathbb{N}$,  and where possibly occurring resonances will require additional attention.

The numerical-asymptotic approach based on (\ref{expansion}) for problem of type (\ref{eq:1.5}) with (\ref{ideal_function_f}) has been widely applied to many differential problems. For example paper \cite{CDI_2009} engages this methodology for problems in electronic engineering, \cite{CDI_2010} for systems of differential equations with extrinsic oscillation, \cite{CDIK_2012} for ODEs with time delay, while wave-like problems were threated in \cite{FS_2014},  \cite{CIKS_2019} and \cite{CKLP_2021}. This methodology bases on establishing the ansatz solution of type (\ref{expansion}) for the investigated problem. Incorporating the ansatz to the differential equation allows for establishing recurrent or simple, non-oscillatory differential relations for the coefficients $p_{r,n}(x,t)$. Final calculations of the coefficients often require numerical treatment of simple, non-oscillatory PDEs. For this reason methods based on the described approach are called {\it numerical-asymptotic} solvers. Numerical-asymptotic methods stem from the theory described in \cite{HLW_2006}, where the theoretical fundaments has been presented for slightly simpler case of expansions (where summation over powers of magnitude $\omega^{-r}$ is not taken into account).\\

The novelty of the paper is twofold: We derive (\ref{expansion}) purely analytically (presenting the formulas for $p_{r,n}$) and we provide the formula for the error of the approximation. Unlike in the cited publications we do not assume that ansatz (\ref{expansion}) solves (\ref{eq:1.5}) but derive it making use of Duhamel formula, Neumann series, and of integration by parts.

To present the error of the approximation we observe that in practice instead of an infinite series (\ref{expansion}) we rather resort to partial sums:
\begin{equation}\label{partial_expansion}
	u(x,t) = p_{0,0}(x,t) +  \sum_{r=1}^{R}\frac{1}{\omega^r}\sum_{n=0}^{N}p_{r,n}(x,t)\ee^{\ii n\omega t}+\mathcal{R}_{R,N}, \qquad t\in [0,t^\star],\ x\in\Omega\subset\mathbb{R}^m,
\end{equation}
and are interested in expression $\mathcal{R}_{R,N}$. According to our knowledge no papers concerned with the numerical-asymptotic solvers based on modulated Fourier expansions provide neither formula $\mathcal{R}_{R,N}$ nor its error bounds. All that has been concluded by now is that the leading error term  (not the error!) of such truncation scales like $\omega^{-R-1}$.  This is however misleading, because the error constant depends on the differential operator $\mathcal{L}$, on the initial condition $u_0(x)$ and on the parameters $\alpha_n (x,t)$. Let us emphasize that the error constant of the leading error term has never been investigated and the form of dependence on the aforementioned parameters has not been known. In this manuscript we provide analytical formulas not only for functions $p_{r,n}(x,t)$, but also for the error term $\mathcal{R}_{R,N}$. In contrast to previous research we investigate the whole error term of the truncation (not only the leading error term). Moreover knowledge of the exact formula $\mathcal{R}_{R,N}$ allows for the realistic estimate of the error. This is important, because it is widely known that modulated Fourier expansions do not have to be convergent. This may happen once error constant $C_R$ of the error term $\mathcal{R}_{R,N}$ exceeds the magnitude of $\omega^{-R-1}$. For this particular reason the knowledge of the formula $\mathcal{R}_{R,N}$ is so important.

Modulated Fourier expansions of type (\ref{expansion}) have been successfully applied to the problems, where oscillatory nature of solutions has intrinsic origin rather than extrinsic, see for example \cite{HCH_2001,GHCH_18,WZ_21,WZ_23}. For obvious reasons our study does not cover these problems.

In Section \ref{Duhamel_Neumann} we present solution of the underlying problem with the  Duhamel formula and will present it as a series of multivariate integrals. Section \ref{Series} is devoted to the asymptotic expansion of these multivariate integrals derived. The main theorem on the asymptotic expansion of the  solution to (\ref{eq:1.5}) is presented in Section \ref{Main_result}. Numerical simulations are presented in Section \ref{Numerics}.

\section{The solution as a series of the multivariate oscillatory integrals}\label{Duhamel_Neumann}

In this section we will show that function $u\in C^1([0,t^\star ),H^{p}(\Omega))$ solving problem (\ref{eq:1.5}) with $u_0\in H^p(\Omega)$ can be represented as an infinite sum of multivariate highly oscillatory integrals  which, as will be mentioned in Remark \ref{scaling} and later claimed in Theorem \ref{integral_AB}, scale like $\mathcal{O}(\omega^{-d}), \ d=0,1,2,\dots$.

Let us recall that for the sake of generality of applications of the presented methodology we do not impose any boundary conditions, but require well possedness of the problem under considerations. The following assumption is necessary for reasonings presented in this section.

\begin{assumption}
	\label{main_assumption}
	We assume that problem (\ref{eq:1.5}) equipped with suitable boundary conditions is well posed and seek  solutions $u[t]:=u(\cdot,t) \in C^1([0,t^\star ],H^{p}(\Omega))$ while  $u_0\in H^p(\Omega)$. Moreover we assume that the linear, differential operator  $\mathcal{L}: D(\mathcal{L}) \supseteq  H^{p}(\Omega) \rightarrow L^2(\Omega)$ defined with (\ref{diff_operator}) is a generator of $C_0$ (strongly continuous) semigroup $e^{t\mathcal{L}}: L^2(\Omega)\rightarrow L^2(\Omega)$, that  $f[t]:=f(\cdot,t)\in C^1([0,t^\star ),L^\infty(\Omega))$ and that the functions $ a_{\textbf{p}}\in C_b^\infty(\Omega)$, $|\textbf{p}|\leq p$, meaning that their all derivatives are bounded on $\Omega$
\end{assumption}

\begin{remark}
	Some boundary conditions may enforce additional assumptions on the domain of the operator $\mathcal{L}$. For example zero boundary conditions would  require  $H^{p}(\Omega)\cap H^{1}_0(\Omega)$  instead of $H^{p}(\Omega)$ in Assumption \ref{main_assumption}. Careful changes of the domains of operators and/or coefficients, however, would not affect the suggested methodology.
\end{remark}

Having Assumption \ref{main_assumption} satisfied, application of the Duhamel principle (also known as variation-of-constants) to equation (\ref{eq:1.5}) results in its equivalent integral form
\begin{eqnarray}\label{eq:Duhamel}
	u[t] = \ee^{t\mathcal{L}}u_0+ \int_{0}^{t}\ee^{(t-\tau)\mathcal{L}}f[\tau]u[\tau]\D\tau.
\end{eqnarray}

Let us define space $Y:=C([0,t^\star];L^2(\Omega))$ with standard norm $\|v\|_Y=\max_{0\leq t\leq t^\star}\|v[t]\|_{L^2(\Omega)}$. Then for function $f$ and operator $\mathcal{L}$ satisfying Assumption \ref{main_assumption} we can define operator $K(u)[t]:Y\rightarrow Y$ as an integral
\begin{equation}\label{K}
	K(u)[t] = \int_{0}^{t}\ee^{(t-\tau)\mathcal{L}}f[\tau]u[\tau] \D\tau, \qquad 0\leq t\leq t^\star,
\end{equation}
and analyse its multiple compositions
\begin{align}\label{nested_int}
	K^d (u)[t] = \int_{0}^{t}\ee^{(t-\tau_1)\mathcal{L}}f[\tau_1]\int_{0}^{\tau_1}\ee^{(\tau_1-\tau_2)\mathcal{L}}f[\tau_2]\dots\int_{0}^{\tau_{d-1}}\ee^{(\tau_{d-1}-\tau_d)\mathcal{L}}f[\tau_d]u[\tau_d]\D \tau_{d} \dots \D \tau_1.
\end{align}
Highly oscillatory nature of function $f$, see definition (\ref{function_f}), indicates that these are $d$ times nested highly oscillatory integrals.
Obviously equation (\ref{eq:Duhamel}) can be expressed as
\begin{equation}\label{eq:Symbolic}
	u[t] = \ee^{t\mathcal{L}}u_0+Ku[t], \quad t \in [0,t^\star],
\end{equation}
and, as often happens, solved in means of the iterative approximations
\begin{equation}\label{iterations}
	u^{[n]}[t] = \ee^{t\mathcal{L}}u_0+Ku^{[n-1]}[t],\ {\rm where}\ u^{[0]}=\ee^{t\mathcal{L}}u_0.
\end{equation}
We can easily observe that formula (\ref{iterations}) leads to the well recognised and widely applied truncated sums of Neumann series, also known as Dyson series or {\em Time Ordering } operator
\begin{equation}\label{Dyson}
	u^{[n]}[t] = \sum_{d=0}^{n }K^d \ee^{t\mathcal{L}}u_0,\ {\rm where}\ K^0={\rm Id}.
\end{equation}

The following theorem is crucial for further investigations as it not only justifies forthcoming expansions but also clearly establishes error bounds for approximation in means of Neumann series for the particular form of (\ref{eq:1.5}), thus of (\ref{eq:Duhamel}) as they are equivalent in our setting.

\begin{thm} \label{Neumann_f(x,t)}
	Let us make  Assumption \ref{main_assumption}. Then the elements of an infinite  sequence $\left\{u^{[n]}\right\}_{n=0}^\infty \in Y$ defined as in (\ref{Dyson}) with the operator $K$ given with (\ref{K}) converge in norm $\|\,\cdot\,\|_Y$ to the unique solution $u^{\star}$  of (\ref{eq:1.5}) as $n\rightarrow\infty$.  
	Moreover, the following estimate holds
	\begin{equation}\label{estimate_u_star}
		\qquad \|u^\star- u^{[n]}\|_Y\leq C_{t^\star}\|u_0\|_{L^2(\Omega)}\exp\Big(C_{t^\star}  \|f\|_Y {t^\star}\Big) \left\|K^{n+1}\right\|_Y,  \\
	\end{equation}
	where $ C_{t^\star}=\max_{0\leq t\leq {t^\star}}\|e^{t\mathcal{L}}\|_{L^2(\Omega)} $.
\end{thm}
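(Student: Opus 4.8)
The plan is to reduce the whole statement to operator-norm estimates for the iterated operators $K^d$ on $Y$, the central one being $\|K^d\|_Y \le (C_{t^\star}\|f\|_Y\,t^\star)^d/d!$. To establish it I would use two uniform bounds valid for all $\tau\in[0,t^\star]$: the semigroup estimate $\|\ee^{s\mathcal{L}}\|_{L^2(\Omega)}\le C_{t^\star}$ for $s\in[0,t^\star]$, which holds by Assumption \ref{main_assumption} since $\mathcal{L}$ generates a $C_0$-semigroup and $[0,t^\star]$ is compact, together with the fact that each factor $f[\tau]$ acts as a multiplication operator with $\|f[\tau]v\|_{L^2(\Omega)}\le\|f\|_Y\|v\|_{L^2(\Omega)}$. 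Inserting these termwise into the nested integral (\ref{nested_int}) gives, for every $t\in[0,t^\star]$,
\begin{equation*}
\|K^d(u)[t]\|_{L^2(\Omega)}\le (C_{t^\star}\|f\|_Y)^d\,\|u\|_Y\int_0^t\!\int_0^{\tau_1}\!\!\cdots\int_0^{\tau_{d-1}}\D\tau_d\cdots\D\tau_1 ,
\end{equation*}
where the $d$ semigroup factors each contribute $C_{t^\star}$ and the $d$ multiplier factors each contribute $\|f\|_Y$; the remaining iterated integral is exactly the volume $t^d/d!\le (t^\star)^d/d!$ of the ordered simplex $\{0\le\tau_d\le\cdots\le\tau_1\le t\}$. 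Taking the maximum over $t$ yields the claimed bound on $\|K^d\|_Y$.

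With this estimate at hand the rest is bookkeeping. First, $\sum_{d=0}^\infty\|K^d\|_Y\le\sum_{d=0}^\infty(C_{t^\star}\|f\|_Y\,t^\star)^d/d!=\exp(C_{t^\star}\|f\|_Y\,t^\star)<\infty$, so the Neumann series $\sum_{d=0}^\infty K^d\ee^{t\mathcal{L}}u_0$ converges absolutely in $Y$; its sum $u^\star$ is precisely the limit of the partial sums $u^{[n]}$ of (\ref{Dyson}). Shifting the summation index shows $u^\star=\ee^{t\mathcal{L}}u_0+Ku^\star$, i.e. $u^\star$ solves the Duhamel equation (\ref{eq:Duhamel}), which under Assumption \ref{main_assumption} is equivalent to (\ref{eq:1.5}). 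Uniqueness follows from the same bound: if $v_1,v_2$ both solve (\ref{eq:Symbolic}) then $v_1-v_2=K^d(v_1-v_2)$ for every $d$, whence $\|v_1-v_2\|_Y\le\|K^d\|_Y\|v_1-v_2\|_Y\to 0$.

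For the error bound (\ref{estimate_u_star}) I would write the tail of the series and factor out $K^{n+1}$,
\begin{equation*}
u^\star-u^{[n]}=\sum_{d=n+1}^\infty K^d\ee^{t\mathcal{L}}u_0=K^{n+1}\sum_{j=0}^\infty K^j\ee^{t\mathcal{L}}u_0=K^{n+1}u^\star,
\end{equation*}
so that $\|u^\star-u^{[n]}\|_Y\le\|K^{n+1}\|_Y\|u^\star\|_Y$. It then remains to bound $\|u^\star\|_Y\le\sum_{d=0}^\infty\|K^d\|_Y\|\ee^{t\mathcal{L}}u_0\|_Y\le\exp(C_{t^\star}\|f\|_Y\,t^\star)\,C_{t^\star}\|u_0\|_{L^2(\Omega)}$, using $\|\ee^{t\mathcal{L}}u_0\|_Y\le C_{t^\star}\|u_0\|_{L^2(\Omega)}$; substituting reproduces (\ref{estimate_u_star}) verbatim.

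The only genuinely delicate point is the factorial gain $1/d!$ coming from the simplex, since this is what turns the mere boundedness of $K$ into an absolutely convergent (indeed entire) series and makes the exponential constant appear; everything else is uniform estimation in $\tau$. I would also take care to justify the interchange of the infinite sum with $K^{n+1}$ in the tail identity, which is legitimate precisely because of the absolute convergence established in the second step.
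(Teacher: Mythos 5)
Your proof is correct and follows essentially the same route as the paper: the factorial bound $\|K^d\|_Y\le (C_{t^\star}\|f\|_Y t^\star)^d/d!$ from the ordered simplex, absolute convergence of the Neumann series, the fixed-point identity, and the tail factorisation $u^\star-u^{[n]}=K^{n+1}u^\star$ combined with the exponential bound on $\|u^\star\|_Y$. The only addition is your explicit uniqueness argument via $v_1-v_2=K^d(v_1-v_2)$, which the paper leaves implicit.
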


\begin{proof}
	Let us start with the observation that there exists $C_{t^\star}$ such that $\|\ee^{t\mathcal{L}}\|_{L^2(\Omega)}\leq C_{t^\star}$, $t\in[0,{t^\star}]$, and that 
	
	\begin{eqnarray}\label{number_k}
		\|K^n\|_Y&\leq& \left(C_{t^\star}\|f\|_Y\right)^{n}\int_{0}^{t^\star}\int_{0}^{\tau_1}\dots\int_{0}^{\tau_{n-1}}\D\tau_{n}\dots\D\tau_1 \leq  \frac{(C_{t^\star}\|f\|_Y {t^\star})^{n}}{n!}.
	\end{eqnarray}
	This means that $\sum_{d=0}^{\infty}\left\|K^d\right\|_Y \leq \exp\Big(C_{t^\star}  \|f\|_Y {t^\star}\Big)$ and  that $\left\{u^{[n]}\right\}_{n=0}^\infty \in Y$ forms a Cauchy sequence with its limit defined as
	
	\begin{eqnarray}\label{u_star}
		u^\star[t]:=\lim_{n \to \infty } u^{[n]}[t]=\sum_{d=0}^{\infty}K ^d \ee^{t\mathcal{L}}u_0.
	\end{eqnarray}
	Now we can notice that $u^\star$ solves (\ref{eq:1.5}). Indeed, since $\lim_{n\rightarrow\infty}\|K^{n}\|_Y=0$ we obtain
	$$
	(I-K)u^\star[t]=(I-K)\sum_{d=0}^{\infty}K^d\ee^{t\mathcal{L}}u_0=\left(I-\lim_{n\rightarrow\infty}K^{n}\right)\ee^{t\mathcal{L}}u_0=\ee^{t\mathcal{L}}u_0,$$
	which coincides with the fixed point of (\ref{eq:Symbolic}) thus solves (\ref{eq:Duhamel}) and (\ref{eq:1.5}).
	Observation that 
	\begin{equation}\label{difference}
		u^\star[t]-u^{[n]}[t]=K^{n+1}\sum_{d=0}^{\infty}K^d\ee^{t\mathcal{L}}u_0
	\end{equation}
	together with (\ref{number_k}) results  in the estimate (\ref{estimate_u_star}) which completes the poof.
\end{proof}

\begin{remark}\label{scaling}
	Estimates (\ref{estimate_u_star}) and  (\ref{number_k}) lead to
	\begin{equation}\label{rate}
		\left\|u^{\star}-u^{[n]}\right\|_Y\leq  C_{t^\star} \|u_0\|_{L^2(\Omega)} \exp\Big(C_{t^\star} \|f\|_Y {t^\star}\Big)  \frac{\left(C_{t^\star} \|f\|_Y {t^\star}\right)^{n+1}}{ (n+1)!}
	\end{equation}
	which obviously specifies certain rate of convergence. Given, however that $f$ is highly oscillatory in time, i.e. $f(x,t) = \ee^{\ii\omega t}\alpha(x),\ \omega \gg 1$ this estimate is suboptimal. It is well known that integrals of highly oscillatory functions are of small magnitude. To illustrate it let us consider the integral of a continuously differentiable function $g(s)=c(s)\ee^{\ii\omega s}$, $s\in[0,t]$. One can easily observe that
	$$
	\left| \int_0^t c(s)\ee^{\ii\omega s}\D s\right|\leq \bar{C} \min\{t,\omega^{-1}\},
	$$
	where $\bar{C}$ depends on the coefficient $c(s)$ and on its derivative.
	
	Following this argumentation and observing that estimate (\ref{estimate_u_star}) utilises multivariate integral of highly oscillatory functions, see (\ref{nested_int}), we can deduce that under stronger assumptions on our problem the estimate of the rate of convergence in (\ref{rate}) may be improved to
	$$
	\left\|u^{\star}-u^{[n]}\right\|\leq   C^n \cdot \omega^{-n-1}.
	$$
	One needs to keep in mind, however that  the constant is independent of $\omega$ but might get dangerously large (even larger then $\omega^{-n-1}$ !) as it depends on other few factors, namely $C^n(t^\star,\mathcal{L},u_0,\alpha)$. For this reason we are interested in the formula describing the error of truncation.
\end{remark}


\section{Multivariate oscillatory integral as an asymptotic expansion}\label{Series}

In this section we are concerned with the operator (\ref{K}) where $u=\ee^{t\mathcal{L}}u_0$ and $f(x,t)=\alpha(x)\ee^{i\omega t}$, $\omega\gg 1$. To distinguish this form we will use letter $T$ instead of $K$:
\begin{equation}\label{T}
	T^d\ee^{t\mathcal{L}}u_0= \int_{0}^{t}\ee^{(t-\tau_1)\mathcal{L}}\alpha\ee^{\ii\omega \tau_1}\int_{0}^{\tau_1}\ee^{(\tau_1-\tau_2)\mathcal{L}}\alpha\ee^{\ii\omega \tau_2}\dots\int_{0}^{\tau_{d-1}}\ee^{(\tau_{d-1}-\tau_d)\mathcal{L}}\alpha\ee^{\ii\omega t_{d}}\ee^{\tau_d\mathcal{L}} u_0 \D \tau_{d} \dots \D \tau_1
\end{equation}

We are interested in an asymptotic expansion of integrals $T^d\ee^{t\mathcal{L}}u_0,  d=1,\dots n$. More specifically for given $d$ we will present $T^d\ee^{t\mathcal{L}}u_0 $ as a finite series and an error term related to the asymptotic expansion. This will lead us towards the expansions of  $n$-th approximations of  (\ref{eq:Duhamel}). Indeed, it is enough to observe that $u^{[n]}$ is defined, see (\ref{Dyson}), as an infinite sum of integrals  $T^d\ee^{t\mathcal{L}}u_0$. Let start with necessary assumption and auxiliary lemmas to which we will refer to in the further part of this manuscript.

\begin{assumption}
	\label{stronger_assumption}
	We assume that problem (\ref{eq:1.5}) equipped with suitable boundary conditions is well posed and seek  solutions $u[t]:=u(\cdot,t) \in C^n([0,t^\star ],H^{np}(\Omega))$ while  
	$u_0\in H^{np}(\Omega)$, where $n\in\mathbb{N}$ is a fixed number. Moreover we assume that the linear, differential operator  $\mathcal{L}: D(\mathcal{L}) \supseteq  H^{np}(\Omega) \rightarrow L^2(\Omega)$ defined with (\ref{diff_operator}) is a generator of a differential (in strong operator topology) semigroup $e^{t\mathcal{L}}: L^2(\Omega)\rightarrow L^2(\Omega)$, that  
	$\alpha \in W^{np,\infty}(\Omega)$ 
	
	and that the functions $ a_{\textbf{p}}\in C_b^\infty(\Omega)$, $|\textbf{p}|\leq p$, meaning that their all derivatives are bounded on $\Omega$
\end{assumption}

\begin{lem}\label{AD}
	Let us make Assumption \ref{stronger_assumption}. The  $k$-th derivative of  expression $\ee^{(t-\tau)\mathcal{L}}\alpha\ee^{\tau\mathcal{L}}$ can be presented in the following way
	\begin{eqnarray*}
		&&	\partial_\tau^k\left(\ee^{(t-\tau)\mathcal{L}}\alpha\ee^{\tau\mathcal{L}}\right)= \ee^{(t-\tau)\mathcal{L}}\underbrace{\Big[\dots\big[[\alpha,\mathcal{L}],\mathcal{L}\big],\dots\Big]}_{\mathcal{L} \ appears\ k \ \text{times}}\ee^{\tau\mathcal{L}}=(-1)^k\ee^{(t-\tau)\mathcal{L}}\ad_\mathcal{L}^k\big(\alpha\big)\ee^{\tau\mathcal{L}},\quad k\leq n,
	\end{eqnarray*}
	where $\ad_\mathcal{L}^0\big(\alpha\big)=\alpha, \quad  \ad_\mathcal{L}^{k}\big(\alpha\big)=\big[\mathcal{L}, \ad_\mathcal{L}^{k-1}\big(\alpha\big)\big]$ and $[X,Y]\equiv XY-YX$ is the commutator of $X$ and $Y$.
\end{lem}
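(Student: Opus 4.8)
The plan is to prove this by induction on $k$, writing $C_0=\alpha$ and $C_k=[C_{k-1},\mathcal{L}]$ for the $k$-fold nested commutator $\underbrace{[\dots[[\alpha,\mathcal{L}],\mathcal{L}],\dots]}_{k}$ appearing in the statement, so that the claim to establish becomes $\partial_\tau^k\!\left(\ee^{(t-\tau)\mathcal{L}}\alpha\ee^{\tau\mathcal{L}}\right)=\ee^{(t-\tau)\mathcal{L}}C_k\ee^{\tau\mathcal{L}}$ for $k\le n$. The case $k=0$ is trivial. The engine of the induction is the single identity, valid for any $\tau$-independent operator $B$,
\[
\partial_\tau\!\left(\ee^{(t-\tau)\mathcal{L}}B\,\ee^{\tau\mathcal{L}}\right)=\ee^{(t-\tau)\mathcal{L}}[B,\mathcal{L}]\,\ee^{\tau\mathcal{L}},
\]
which I would derive from the product rule together with $\partial_\tau\ee^{\tau\mathcal{L}}=\mathcal{L}\ee^{\tau\mathcal{L}}=\ee^{\tau\mathcal{L}}\mathcal{L}$ and $\partial_\tau\ee^{(t-\tau)\mathcal{L}}=-\mathcal{L}\ee^{(t-\tau)\mathcal{L}}=-\ee^{(t-\tau)\mathcal{L}}\mathcal{L}$, the two representations of each derivative being available because $\mathcal{L}$ commutes with its own semigroup.

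For the inductive step I would assume the formula for $k-1\le n-1$, namely $\partial_\tau^{k-1}\!\left(\ee^{(t-\tau)\mathcal{L}}\alpha\ee^{\tau\mathcal{L}}\right)=\ee^{(t-\tau)\mathcal{L}}C_{k-1}\ee^{\tau\mathcal{L}}$, and differentiate once more using the identity above with $B=C_{k-1}$. This replaces $C_{k-1}$ by $[C_{k-1},\mathcal{L}]=C_k$, closing the induction and producing the first (nested-commutator) right-hand side. The second right-hand side then follows from $C_k=(-1)^k\ad_\mathcal{L}^k(\alpha)$, itself a one-line induction using $[B,\mathcal{L}]=-[\mathcal{L},B]=-\ad_\mathcal{L}(B)$; applied to $C_1=[\alpha,\mathcal{L}]=-\ad_\mathcal{L}(\alpha)$ this reproduces exactly the sign $(-1)^k$ in the statement.

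The algebra is elementary; the real content, and the step I expect to be the main obstacle, is the functional-analytic justification that these manipulations are legitimate when $\mathcal{L}$ is an unbounded differential operator. Two points need care. First, differentiating $\tau\mapsto\ee^{\tau\mathcal{L}}$ and $\ee^{(t-\tau)\mathcal{L}}$ in the strong operator topology is precisely what Assumption \ref{stronger_assumption} grants by requiring the semigroup to be differentiable, which legitimises the product rule at every one of the $k$ steps. Second, one must track the Sobolev orders so that each intermediate symbol acts between the right spaces: since $\alpha$ is a multiplication operator and $\mathcal{L}$ is a differential operator of order $p$, a single commutator with $\mathcal{L}$ raises the differential order by at most $p-1$, so $\ad_\mathcal{L}^{k}(\alpha)$ is a differential operator of order at most $k(p-1)\le kp$ with coefficients built from $\alpha$ and the $a_{\textbf{p}}$. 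Consequently, for $k\le n$ the hypotheses $\alpha\in W^{np,\infty}(\Omega)$, $a_{\textbf{p}}\in C_b^\infty(\Omega)$ and $u_0\in H^{np}(\Omega)$ guarantee that $\ad_\mathcal{L}^k(\alpha)\ee^{\tau\mathcal{L}}u_0$ stays in $L^2(\Omega)$ and that each semigroup factor and commutator in the chain is defined on the vectors to which it is applied. I would therefore interleave the algebraic induction with this bookkeeping of orders, checking at each differentiation that the differentiated object lies in the domain required by the identity above; this is exactly where the stronger Assumption \ref{stronger_assumption}, rather than Assumption \ref{main_assumption}, is genuinely used.
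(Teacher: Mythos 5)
Your argument is correct and is exactly the inductive proof the paper has in mind: the authors merely state that Lemma~\ref{AD} ``can be easily proved by induction,'' and your product-rule identity $\partial_\tau\bigl(\ee^{(t-\tau)\mathcal{L}}B\,\ee^{\tau\mathcal{L}}\bigr)=\ee^{(t-\tau)\mathcal{L}}[B,\mathcal{L}]\,\ee^{\tau\mathcal{L}}$ together with the sign bookkeeping $C_k=(-1)^k\ad_\mathcal{L}^k(\alpha)$ is precisely that induction. Your additional tracking of differential orders and Sobolev regularity under Assumption~\ref{stronger_assumption} goes beyond what the paper writes down and is a welcome (correct) justification of the formal manipulations.
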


\begin{lem}\label{IA0BLemma}
	Let us make Assumption \ref{stronger_assumption}. The following  integral can be expressed as a sum of $n+1$ components scaling like $\omega^{-m},\quad m=1,\ldots n.$
	\begin{eqnarray}\label{IA0B}
		&&\int_{0}^{t}\ee^{\ii\omega\tau}\ee^{(t-\tau)\mathcal{L}}\alpha\ee^{\tau\mathcal{L}}u_0\D\tau=\sum_{m=0}^{n-1}\frac{1}{(\ii\omega)^{m+1}}\Big[\ee^{\ii\omega t}\ad_\mathcal{L}^m\big(\alpha\big)\ee^{t\mathcal{L}}-\ee^{t\mathcal{L}}\ad_\mathcal{L}^m\big(\alpha\big)\Big]u_0\\ \nonumber
		&&\hspace{3.8cm}+\frac{1}{(\ii\omega)^n}\int_{0}^{t}\ee^{\ii\omega\tau}\ee^{(t-\tau)\mathcal{L}}\ad_\mathcal{L}^n\big(\alpha\big)\ee^{\tau\mathcal{L}}u_0\D\tau \nonumber
	\end{eqnarray}
\end{lem}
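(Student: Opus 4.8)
The plan is to prove (\ref{IA0B}) by $n$ successive integrations by parts, at each step integrating the oscillatory factor $\ee^{\ii\omega\tau}$ and differentiating the operator-valued kernel $\ee^{(t-\tau)\mathcal{L}}\ad_\mathcal{L}^k\big(\alpha\big)\ee^{\tau\mathcal{L}}$. To organise the induction I would introduce, for $0\le k\le n$, the family of integrals
\begin{equation*}
	I_k:=\int_{0}^{t}\ee^{\ii\omega\tau}\ee^{(t-\tau)\mathcal{L}}\ad_\mathcal{L}^k\big(\alpha\big)\ee^{\tau\mathcal{L}}u_0\,\D\tau,
\end{equation*}
so that the left-hand side of (\ref{IA0B}) is exactly $I_0$ and the remainder term in (\ref{IA0B}) is $(\ii\omega)^{-n}I_n$. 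The whole statement then reduces to establishing the one-step recursion
\begin{equation*}
	I_k=\frac{1}{\ii\omega}\Big[\ee^{\ii\omega t}\ad_\mathcal{L}^k\big(\alpha\big)\ee^{t\mathcal{L}}-\ee^{t\mathcal{L}}\ad_\mathcal{L}^k\big(\alpha\big)\Big]u_0+\frac{1}{\ii\omega}I_{k+1},\qquad 0\le k\le n-1,
\end{equation*}
after which a telescoping sum (equivalently, a short induction on the number of applied steps) yields (\ref{IA0B}) immediately, the factor $(\ii\omega)^{-(m+1)}$ in front of the $m$-th boundary term being produced automatically by the $m+1$ accumulated divisions by $\ii\omega$.

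To derive the recursion I would integrate $I_k$ by parts with $\D v=\ee^{\ii\omega\tau}\D\tau$, hence $v=(\ii\omega)^{-1}\ee^{\ii\omega\tau}$, treating $\tau\mapsto\ee^{(t-\tau)\mathcal{L}}\ad_\mathcal{L}^k\big(\alpha\big)\ee^{\tau\mathcal{L}}u_0$ as the differentiated factor. By the first-derivative ($k=1$) form of the identity in Lemma \ref{AD}, now read with $\ad_\mathcal{L}^k\big(\alpha\big)$ in the role of $\alpha$ (the identity uses only the commutator with $\mathcal{L}$, so it holds for any admissible operator in place of $\alpha$, and $\ad_\mathcal{L}^1\!\big(\ad_\mathcal{L}^k(\alpha)\big)=\ad_\mathcal{L}^{k+1}(\alpha)$), the $\tau$-derivative of this kernel equals $-\,\ee^{(t-\tau)\mathcal{L}}\ad_\mathcal{L}^{k+1}\big(\alpha\big)\ee^{\tau\mathcal{L}}u_0$; the two minus signs combine so that the remaining integral is exactly $+(\ii\omega)^{-1}I_{k+1}$. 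For the boundary contributions I would use $\ee^{(t-\tau)\mathcal{L}}\big|_{\tau=t}=\ee^{\tau\mathcal{L}}\big|_{\tau=0}=\mathrm{Id}$: the endpoint $\tau=t$ gives $(\ii\omega)^{-1}\ee^{\ii\omega t}\ad_\mathcal{L}^k\big(\alpha\big)\ee^{t\mathcal{L}}u_0$ while the endpoint $\tau=0$ gives $-(\ii\omega)^{-1}\ee^{t\mathcal{L}}\ad_\mathcal{L}^k\big(\alpha\big)u_0$, which together form the bracketed boundary term above.

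The genuinely delicate point is not this algebra but the justification of each integration by parts in the Banach-space-valued (semigroup) setting, and this is exactly what Assumption \ref{stronger_assumption} is designed to supply. I would observe that $\ad_\mathcal{L}^k\big(\alpha\big)$ is itself a differential operator whose order grows with $k$ (each commutator with the order-$p$ operator $\mathcal{L}$ raises the order, the total after $n$ steps being at most $np$), so carrying the procedure up to $k=n$ requires $\alpha\in W^{np,\infty}(\Omega)$ together with $a_{\textbf{p}}\in C_b^\infty(\Omega)$ to keep the coefficients of $\ad_\mathcal{L}^k\big(\alpha\big)$ bounded, and it requires $u_0\in H^{np}(\Omega)$ with a differentiable semigroup $\ee^{\tau\mathcal{L}}$ preserving the needed Sobolev regularity, so that $\ad_\mathcal{L}^{k}\big(\alpha\big)\ee^{\tau\mathcal{L}}u_0\in L^2(\Omega)$ for every $k\le n$. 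Under these hypotheses each map $\tau\mapsto\ee^{(t-\tau)\mathcal{L}}\ad_\mathcal{L}^k\big(\alpha\big)\ee^{\tau\mathcal{L}}u_0$ is continuously differentiable in the $L^2$-norm, whence the fundamental theorem of calculus and the product rule for vector-valued functions apply and the integration by parts is legitimate at every step $k=0,\dots,n-1$. The main obstacle is therefore the regularity bookkeeping confirming that all these conditions hold simultaneously through the $n$ iterations — which is precisely why the space $H^{np}(\Omega)$ and the weight $W^{np,\infty}(\Omega)$ (rather than the weaker $H^p,\,L^\infty$ of Assumption \ref{main_assumption}) enter the hypotheses.
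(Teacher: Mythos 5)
Your proposal is correct and coincides with the paper's argument: the paper disposes of Lemma \ref{IA0BLemma} in one line by saying it follows from ``$n$-times integration by parts'' (using Lemma \ref{AD} to differentiate the kernel), which is exactly the recursion $I_k=\frac{1}{\ii\omega}\big[\ee^{\ii\omega t}\ad_\mathcal{L}^k(\alpha)\ee^{t\mathcal{L}}-\ee^{t\mathcal{L}}\ad_\mathcal{L}^k(\alpha)\big]u_0+\frac{1}{\ii\omega}I_{k+1}$ that you derive and telescope. Your additional remarks on the regularity bookkeeping (why $H^{np}$ and $W^{np,\infty}$ are needed to legitimise each step) go beyond what the paper writes down but are consistent with its Assumption \ref{stronger_assumption}.
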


Lemma \ref{AD} can be easily proved by induction while Lemma \ref{IA0BLemma} by $n$-timeas integration by parts. Let us observe that in case $u_0\in H^{(n+1)p}(\Omega)$ and $ \alpha \in W^{(n+1)p,\infty}(\Omega)$, the last component of expansion (\ref{IA0B}) scales like $\omega^{-(n+1)}$. Indeed, the last term can be integrated by parts again.

To derive the formula for the asymptotic series of the multivariate highly oscillatory integral $T^d\ee^{t\mathcal{L}}u_0$,  we will heavily utilise Lemma \ref{IA0BLemma}. We will start with the asymptotic expansion of the innermost integral, then we will expand asymptotically each successive integrals up to the  outermost. Integration by parts of each integral will result in appearance of the two terms $\sum_{m=0}^{n-1}\frac{1}{(\ii\omega)^{m+1}}\ee^{\ii\omega t}\ad_\mathcal{L}^m\big(\alpha\big)\ee^{t\mathcal{L}}u_0$ and $\sum_{m=0}^{n-1}\frac{-1}{(\ii\omega)^{m+1}}\ee^{t\mathcal{L}}\ad_\mathcal{L}^m\big(\alpha\big)u_0$. As a consequence, the number of components in the series of expansion of $T^d\ee^{t\mathcal{L}}u_0$ will grow exponentially in $d$, therefore to facilitate the notation we introduce the following definitions.

\begin{defi}\label{multindex}
	Let us define multi-index ${\bf k^s}=(k_1,\ldots,k_s)\in \mathbb{N}^s$, and understand that $|{\bf k^s}|=|{\bf k^s}|_1=k_1+\ldots+k_s$.
\end{defi}
\begin{defi}\label{N}
	Let us make Assumption \ref{stronger_assumption} and define operator $\mathcal{N}_{k_l}^{k_r}[\mathcal{L},\alpha]$ as follows
	$$
	\mathcal{N}_{k_l}^{k_r}[\mathcal{L},\alpha]=
	\left\{
	\begin{array}{ll}
		\ad_\mathcal{L}^{k_{r}}\big(\alpha \ad_\mathcal{L}^{k_{r-1}}(\dots  \alpha \ad_\mathcal{L}^{k_{l}}(\alpha))\big), &  l< r\\
		\\
		\ad_\mathcal{L}^{k_l}(\alpha), & l=r\\
		\\
		{\rm Id}, & l > r \\
	\end{array}
	\right.,
	$$
	where $k_l, k_{l+1},\dots, k_{r-1},k_r$,  $l\leq r$ are inputs of multi-index ${\bf k^{s}}$, and {\rm Id} is an identity function on space $W$.
	For clarity of exposition we will alternate notation $\mathcal{N}_{k_l}^{k_r}[\mathcal{L},\alpha]$ with $\mathcal{N}_{k_l}^{k_r}$  in more complicated expressions.
\end{defi}

\begin{remark}
	$\mathcal{N}_{k_l}^{k_r}$ can be also defined by the recursive formula
	$$\mathcal{N}_{k_l}^{k_l}= \ad_\mathcal{L}^{k_l}\big(\alpha\big), \quad \mathcal{N}_{k_l}^{k_r}= \ad_\mathcal{L}^{k_{r}}\big(\alpha\mathcal{N}_{k_l}^{k_{r-1}}\big), \quad \text{for} \quad r>l \quad \text{and} \quad \mathcal{N}_{k_{l}}^{k_r} = {\rm Id}, \quad  l>r.$$
\end{remark}

\begin{defi} \label{F}
	Let us make Assumption \ref{stronger_assumption}. Having  operator $\mathcal{N}_{k_l}^{k_r}[\mathcal{L},\alpha]$ introduced in Definition \ref{N} we define function $\mathcal{F}_\ell[\mathcal{L},\alpha]$, independent of $t$, which satisfy the following recursive relationship
	\begin{eqnarray} 
		&&\mathcal{F}_0[\mathcal{L},\alpha] = {\rm Id},\nonumber\\
		&&\mathcal{F}_{\ell}[\mathcal{L},\alpha] = -\sum_{m=0}^{\ell-1}\Big( a_{\ell-m}^\ell \mathcal{N}_{k_{m+1}}^{k_\ell}[\mathcal{L},\alpha]\mathcal{F}_{m}[\mathcal{L},\alpha] \Big),
	\end{eqnarray}
	
	where the coefficients $a_j^\ell \in \mathbb{Q}$, $j = 0,1,2,\ldots, \ell$ are numbers satisfy:
	\begin{equation}\label{coef_a}
		a_j^\ell = \frac{1}{\prod_{r=0}^{j-1}(j-r)^{k_{\ell-r}+1}},\quad a_0^\ell=1.
	\end{equation}	
\end{defi}
As it was in the case of $\mathcal{N}_{k_l}^{k_r}[\mathcal{L},\alpha]$ we will write $\mathcal{F}_{\ell}$  instead of $\mathcal{F}_{\ell}[\mathcal{L},\alpha]$ if only it simplifies notation and does not lead to misunderstandings. 

\begin{exam}
	To illustrate the Definition \ref{F}  we present the exact formulas for $\mathcal{F}_j, j=1,2,3$.
	\begin{eqnarray*}
		\mathcal{F}_1[\mathcal{L},\alpha]&=&-\ad_\mathcal{L}^{k_1}\big(\alpha\big)\\
		\mathcal{F}_2[\mathcal{L},\alpha]&=&-\frac{1}{2^{k_2+1}} \ad_\mathcal{L}^{k_2}\big(\alpha \ad_\mathcal{L}^{k_1}(\alpha)\big)+\ad_\mathcal{L}^{k_2}(\alpha)\ad_\mathcal{L}^{k_1}(\alpha)\\
		\mathcal{F}_3[\mathcal{L},\alpha]&=&\frac{-1 }{ 3^{k_3+1}\cdot 2^{k_2+1}} \ad_\mathcal{L}^{k_3}\big(\alpha \ad_\mathcal{L}^{k_2}(\alpha \ad_\mathcal{L}^{k_1}(\alpha))\big) +\frac{1}{ 2^{k_3+1}}\ad_\mathcal{L}^{k_3}\big(\alpha \ad_\mathcal{L}^{k_2}(\alpha)\big) \ad_\mathcal{L}^{k_1}(\alpha)\\
		&&+\frac{1}{2^{k_2+1}}\ad_\mathcal{L}^{k_3}(\alpha) \ad_\mathcal{L}^{k_2}\big(\alpha \ad_\mathcal{L}^{k_1}(\alpha)\big)- \ad_\mathcal{L}^{k_3} (\alpha)\ad_\mathcal{L}^{k_2} (\alpha)\ad_\mathcal{L}^{k_1}(\alpha)
	\end{eqnarray*}
\end{exam}

\medskip

Subsequent corollary, which can be easily proved by induction,{ will be referred to in the proof of the main theorem while calculating nested integrals with leading frequencies $\frac{1}{(\ii\omega)^{k_1+1}}$ and  $\frac{1}{(\ii\omega)^{k_2+1}}$, where $k_1, k_2$ will be inputs of multi-index ${\textbf k^2}$
	\begin{cor}\label{summation}
		The following identity holds
		\begin{eqnarray}
			\sum_{k_1=0}^{n}\frac{1}{(\ii\omega)^{k_1+1}}a_{k_1}\sum_{k_2=0}^{n-k_1-1}\frac{1}{(\ii\omega)^{k_2+1}}b_{k_2} = \sum_{k_1+k_2=0}^{n-1}\frac{1}{(\ii\omega)^{k_1+k_2+2}}a_{k_1}b_{k_2}
		\end{eqnarray}
		with assumption that $\sum_{k=0}^{-1}a_k = 0$.
	\end{cor}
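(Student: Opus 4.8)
The plan is to prove the identity in Corollary \ref{summation} by a direct reindexing of the double sum, treating it as a Cauchy-product rearrangement over the discrete triangular region carved out by the summation limits. First I would examine the left-hand side and observe that the pair $(k_1,k_2)$ ranges over exactly those indices satisfying $0\le k_1\le n$ and $0\le k_2\le n-k_1-1$; combining these two constraints shows that the admissible pairs are precisely those with $k_1\ge 0$, $k_2\ge 0$, and $k_1+k_2\le n-1$. This is the same index set described on the right-hand side by the single constraint $0\le k_1+k_2\le n-1$ (with $k_1,k_2\ge 0$ implicit), so the two sums range over identical collections of lattice points.

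Next I would match the summands termwise. For a fixed pair $(k_1,k_2)$ in the common index set, the left-hand side contributes the factor $\frac{1}{(\ii\omega)^{k_1+1}}a_{k_1}\cdot\frac{1}{(\ii\omega)^{k_2+1}}b_{k_2}$, which collapses to $\frac{1}{(\ii\omega)^{k_1+k_2+2}}a_{k_1}b_{k_2}$ upon combining the powers of $\ii\omega$. This is exactly the summand appearing on the right-hand side. Since the index sets coincide and the summands agree term by term, the identity follows. I would also verify that the stated convention $\sum_{k=0}^{-1}a_k=0$ correctly handles the degenerate inner sum: when $k_1=n$, the upper limit $n-k_1-1=-1$ forces the inner sum to vanish, which is consistent with the right-hand constraint $k_1+k_2\le n-1$ excluding any pair with $k_1=n$.

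The main (and only mild) obstacle is purely bookkeeping: one must be careful that the notation $\sum_{k_1+k_2=0}^{n-1}$ on the right is interpreted as a sum over all nonnegative pairs $(k_1,k_2)$ with total not exceeding $n-1$, rather than as a sum over a single running index. I would make this interpretation explicit, perhaps by rewriting the right-hand side as $\sum_{s=0}^{n-1}\sum_{k_1+k_2=s}$ and confirming the correspondence diagonal by diagonal. Because the result is an elementary rearrangement with no analytic content, a clean induction on $n$ (peeling off the top diagonal $k_1+k_2=n-1$) would serve equally well as an alternative and is the form most naturally aligned with the paper's remark that the corollary "can be easily proved by induction." Either route reduces the claim to the observation that the two triangular index sets agree and the exponents add, which completes the proof.
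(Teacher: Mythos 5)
Your proof is correct. The paper itself offers no written proof of this corollary --- it only remarks beforehand that it ``can be easily proved by induction'' --- so your direct reindexing argument is a genuinely different (and arguably more transparent) route than the one the authors gesture at. Your approach reduces the identity to two elementary observations: the constraints $0\le k_1\le n$, $0\le k_2\le n-k_1-1$ describe exactly the triangular lattice set $\{(k_1,k_2): k_1,k_2\ge 0,\ k_1+k_2\le n-1\}$ appearing on the right, and the powers of $\ii\omega$ combine as $(k_1+1)+(k_2+1)=k_1+k_2+2$. You also correctly handle the degenerate case $k_1=n$ via the stated convention $\sum_{k=0}^{-1}=0$, and you rightly flag the only real subtlety, namely that $\sum_{k_1+k_2=0}^{n-1}$ must be read as a sum over all nonnegative pairs with total at most $n-1$ (equivalently $\sum_{s=0}^{n-1}\sum_{k_1+k_2=s}$). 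What the direct rearrangement buys over induction is that it makes the underlying bijection of index sets explicit, which is precisely the fact used later in step (4) of the proof of Theorem \ref{integral_AB}; the induction on $n$ you mention as an alternative would work equally well but obscures that bijection behind a peeling-off argument. No gaps.
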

	
	\begin{thm}\label{integral_AB}
		Let us make Assumption \ref{stronger_assumption}. The highly oscillatory integral $T^d\ee^{t\mathcal{L}}u_0$ defined with (\ref{T}) can be presented as a sum of finite series $\mathcal{S}^{d}_n[\mathcal{L},\alpha](t)$ 
		and the remaining term $E^{d}_n[\mathcal{L},\alpha](t) $
		
		\begin{equation}\label{needed_T}
			T^d\ee^{t\mathcal{L}}u_0= \mathcal{S}^{d}_n[\mathcal{L},\alpha](t)+ E^{d}_n[\mathcal{L},\alpha](t),\quad\ n\geq d, \quad {\rm where} 
		\end{equation}
		
		\begin{align}\label{integral_series}
			\mathcal{S}^{d}_n[\mathcal{L},\alpha](t) &= \sum_{|{\bf k^{d}}|=0 }^{n-d}\frac{1}{(\ii\omega)^{d+|{\mathbf k^d}|}}\sum_{\ell=0}^{d}a_{d-\ell}^d\ee^{(d-\ell)\ii\omega  t}\mathcal{N}_{k_{\ell+1}}^{k_d}\ee^{t\mathcal{L}}\mathcal{F}_{\ell}u_0,\\\nonumber
			&\\ \label{integral_series_error}
			E^{d}_n[\mathcal{L},\alpha](t) &= \frac{1}{(\ii\omega)^{n}}\sum_{|{\mathbf k^d}|=n-d+1}^{}\ \ \sum_{\ell=0}^{d-1}b_{d-\ell}^d \int_{0}^{t}\ee^{{(t-\tau_1)\mathcal{L}}}\mathcal{N}_{k_{\ell+1}}^{k_d}\ee^{\tau_1\mathcal{L}}\ee^{(d-\ell)\ii\omega \tau_1}\D \tau_1 \mathcal{F}_{\ell}u_0\\\nonumber
			&+ \int_{0}^{t}\ee^{(t-\tau_1)\mathcal{L}}\alpha\ee^{\ii\omega \tau_1}E^{d-1}_n[\mathcal{L},\alpha](\tau_1) u_0\D \tau_1,\\	\nonumber
		\end{align}
		and $E_n^0[\mathcal{L},\alpha](t)\equiv 0 $ while $E_n^{d-1}[\mathcal{L},\alpha](t)$ is the remaining term of the analogous integral $T^{d-1}\ee^{t\mathcal{L}}u_0$ and coefficients 
		$b_j^\ell$, $j = 0,1,2,\ldots, \ell$ satisfy:
		\begin{equation}\label{coef_b}
			b_j^\ell = 
			\frac{1}{j^{k_\ell}\cdot\prod_{r=1}^{j-1}(j-r)^{k_{\ell-r}+1}}, \quad b_0^\ell=1, 
		\end{equation}		
		with $\prod_{r=1}^{0} = 1$. Moreover 
		\begin{equation}\label{T_estimate}
			\|T^d\|_Y\leq\omega^{-d}C_T^d(t^\star,\mathcal{L},u_0,\alpha),
		\end{equation}
		were  $C_T^d(t^\star,\mathcal{L},u_0,\alpha$) is a constant depending on $d$, $t^\star$, operator $\mathcal{L}$, initial condition $u_0$ and function $\alpha$.
	\end{thm}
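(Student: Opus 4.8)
The plan is to argue by induction on the depth $d$, exploiting the self-similar structure
$$T^d\ee^{t\mathcal{L}}u_0 = \int_0^t \ee^{(t-\tau_1)\mathcal{L}}\alpha\ee^{\ii\omega\tau_1}\,T^{d-1}\ee^{\tau_1\mathcal{L}}u_0\,\D\tau_1,$$
which is read off (\ref{T}) by peeling off the outermost integral. The base case $d=1$ is exactly Lemma \ref{IA0BLemma}: I would check that its finite sum coincides with $\mathcal{S}^1_n$ and its remainder with $E^1_n$, which reduces to verifying $a^1_0=a^1_1=b^1_1=1$ from (\ref{coef_a})--(\ref{coef_b}), that $\mathcal{F}_0=\mathrm{Id}$ and $\mathcal{F}_1=-\ad_\mathcal{L}^{k_1}(\alpha)$ from Definition \ref{F}, and that $\mathcal{N}_{k_1}^{k_1}=\ad_\mathcal{L}^{k_1}(\alpha)$ while $\mathcal{N}_{k_2}^{k_1}=\mathrm{Id}$ from Definition \ref{N}.

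For the inductive step I would substitute $T^{d-1}\ee^{\tau_1\mathcal{L}}u_0=\mathcal{S}^{d-1}_n(\tau_1)+E^{d-1}_n(\tau_1)$ and split the two contributions. The piece carrying $E^{d-1}_n(\tau_1)$ coincides with the second line of (\ref{integral_series_error}), so nothing is left to do there. The essential computation is $\int_0^t\ee^{(t-\tau_1)\mathcal{L}}\alpha\ee^{\ii\omega\tau_1}\mathcal{S}^{d-1}_n(\tau_1)\,\D\tau_1$: for each summand of $\mathcal{S}^{d-1}_n$ I would merge the exponentials into $\ee^{(d-\ell)\ii\omega\tau_1}$ (here $d-\ell\ge 1$, so no resonance occurs) and apply Lemma \ref{IA0BLemma} with $\alpha$ replaced by the operator $\beta:=\alpha\,\mathcal{N}_{k_{\ell+1}}^{k_{d-1}}$ and base frequency $(d-\ell)\omega$; this is legitimate because Lemma \ref{AD} holds for an arbitrary operator in place of $\alpha$. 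The decisive observation is that the integration-by-parts index $m$ of this outermost integral is precisely the new multi-index entry $k_d$, and that $\ad_\mathcal{L}^{k_d}(\beta)=\ad_\mathcal{L}^{k_d}(\alpha\,\mathcal{N}_{k_{\ell+1}}^{k_{d-1}})=\mathcal{N}_{k_{\ell+1}}^{k_d}$ by the recursion in the Remark following Definition \ref{N}.

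It then remains to sort the output. The upper-endpoint terms carry $\ee^{(d-\ell)\ii\omega t}\mathcal{N}_{k_{\ell+1}}^{k_d}\ee^{t\mathcal{L}}$ and, after the coefficient identity $a^{d-1}_{d-1-\ell}\,(d-\ell)^{-(k_d+1)}=a^d_{d-\ell}$ (a telescoping of the products in (\ref{coef_a})), they furnish the summands of $\mathcal{S}^d_n$ with $\ell=0,\dots,d-1$. The lower-endpoint terms are non-oscillatory, carry $\ee^{t\mathcal{L}}\mathcal{N}_{k_{\ell+1}}^{k_d}\mathcal{F}_\ell$, and summed over $\ell$ with the same coefficients they reproduce, through the recursion of Definition \ref{F}, exactly $\ee^{t\mathcal{L}}\mathcal{F}_d u_0$, i.e. the missing $\ell=d$ summand of $\mathcal{S}^d_n$. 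The double summation over $|{\bf k^{d-1}}|$ and over the new index $k_d$ is collapsed to a single sum over $|{\bf k^d}|$ with upper limit $n-d$ by Corollary \ref{summation}. Finally the last-step remainders (index $k_d=n-(d-1)-|{\bf k^{d-1}}|$, whence $|{\bf k^d}|=n-d+1$), together with the companion identity $a^{d-1}_{d-1-\ell}\,(d-\ell)^{-k_d}=b^d_{d-\ell}$ for (\ref{coef_b}), assemble into the first line of (\ref{integral_series_error}), identifying $E^d_n$. I expect the main obstacle to be exactly this bookkeeping: keeping the operator ordering inside $\mathcal{N}_{k_{\ell+1}}^{k_d}$ and $\mathcal{F}_\ell$ straight while simultaneously establishing the two rational-coefficient identities and invoking Corollary \ref{summation} with the correct $|{\bf k^{d-1}}|$-dependent truncation of the inner sum.

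For the scaling bound (\ref{T_estimate}) I would run a separate, lighter induction on $d$ using $T^d\ee^{t\mathcal{L}}u_0=T(T^{d-1}\ee^{\cdot\mathcal{L}}u_0)$. By the expansion just proved, $T^{d-1}\ee^{\cdot\mathcal{L}}u_0$ is a finite combination of smooth amplitudes times $\ee^{j\ii\omega\tau}$ with $j\ge 0$ and of size $\mathcal{O}(\omega^{-(d-1)})$; multiplication by $\ee^{\ii\omega\tau}$ strictly raises every frequency, so a single integration by parts (Lemma \ref{IA0BLemma} with $n=1$, as in Remark \ref{scaling}) gains one further factor $\omega^{-1}$ with no vanishing denominator, giving $\mathcal{O}(\omega^{-d})$. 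Gathering the bounds for $\|\ee^{t\mathcal{L}}\|$, for the iterated commutators defining $\mathcal{N}$ and $\mathcal{F}$ (finite under Assumption \ref{stronger_assumption}), for $t^\star$ and for $\|u_0\|$ into the constant $C_T^d(t^\star,\mathcal{L},u_0,\alpha)$ then yields (\ref{T_estimate}).
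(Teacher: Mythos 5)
Your proposal follows essentially the same route as the paper: induction on $d$ with Lemma \ref{IA0BLemma} as the base case, substitution of the inductive expansion into the outermost integral, integration by parts with amplitude $\alpha\,\mathcal{N}_{k_{\ell+1}}^{k_{d-1}}$ and frequency $(d-\ell)\omega$, the two rational-coefficient identities for $a_j^\ell$ and $b_j^\ell$, Corollary \ref{summation} to collapse the double sum, and the recursion of Definition \ref{F} to supply the missing $\ell=d$ summand. The only slight imprecision is in your argument for the scaling bound, where $T^{d-1}\ee^{\cdot\mathcal{L}}u_0$ is not purely a finite combination of modulated smooth amplitudes --- it also contains the remainder $E^{d-1}_n$, to which a further integration by parts does not apply --- but since that remainder is already of size $\mathcal{O}(\omega^{-n})$ with $n\geq d$, the conclusion (\ref{T_estimate}) is unaffected, exactly as in the paper's own estimate.
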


	\begin{remark}
		To shorten notation will write $S_n^d(t)$ and $E_n^d(t)$ instead of $S_n^d[\mathcal{L},\alpha](t)$ and  $E_n^d[\mathcal{L},\alpha](t)$ respectively whenever it does not lead to misunderstanding. We will write also $T^d\ee^{t\mathcal{L}}$ instead of $T^d\ee^{t\mathcal{L}}u_0$. \\
	\end{remark}

	\begin{proof}\kern-1em{\it of Theorem \ref{integral_AB}} 
		
		We will prove (\ref{needed_T}) by induction on $d$. Thus taking $d=1$ we consider the single integral $T^1\ee^{t\mathcal{L}}=\int_{0}^{t}\ee^{\ii\omega\tau}\ee^{(t-\tau)\mathcal{L}}\alpha\ee^{\tau\mathcal{L}}\D\tau$. In this simplest case  multi-index ${\mathbf k^1} \in \mathbb{N}^1$ consists of one input $k_1$ only, and $a_1^1 = b_1^1=1$ while $\mathcal{F}_1=-\ad_\mathcal{L}^{k_1}\big(\alpha\big)$. With these inputs the considered integral  is given with the following formula
		
		\begin{eqnarray*}
			&&T^1\ee^{t\mathcal{L}} = \sum_{k_1=0}^{n-1}\frac{1}{(\ii\omega)^{1+k_1}}\left(\ee^{\ii\omega t}\ad_\mathcal{L}^{k_1}(\alpha)\ee^{t\mathcal{L}}+\ee^{t\mathcal{L}}\left(-\ad_\mathcal{L}^{k_1}(\alpha)\right)\right)+\frac{1}{(\ii\omega)^n}\int_{0}^{t}\ee^{(t-\tau_1)\mathcal{L}}\ad_\mathcal{L}^{n}(\alpha)\ee^{\tau_1\mathcal{L} }\ee^{\ii\omega \tau_1}\D \tau_1,
		\end{eqnarray*}
		which fully coincides with (\ref{IA0B}) from Lemma \ref{IA0BLemma}  and proves the first step of induction. 
		Suppose now that the formula is valid for $T^{d}\ee^{t\mathcal{L}}$. Then 
		\begin{eqnarray*}
			&&T^{d+1}\ee^{t\mathcal{L}}=\\
			&& \int_{0}^{t} \ee^{(t-\tau_1)\mathcal{L}}\alpha\ee^{\ii\omega \tau_1}T^{d}\ee^{\tau_1\mathcal{L}} \D \tau_1\stackrel{(1)}{=} \int_{0}^{t} \ee^{(t-\tau_1)\mathcal{L}}\alpha\ee^{\ii\omega \tau_1}\Big(\mathcal{S}^{d}_n(\tau_1)+ E^{d}_n(\tau_1)\Big)\D \tau_1 \\
			&\stackrel{(2)}{=}&\sum_{|{\bf k^d}|=0 }^{n-d}\frac{1}{(\ii\omega)^{d+|{\bf k^d}|}}\sum_{\ell=0}^{d}a_{d-\ell}^{d}\int_{0}^{t}\ee^{(t-\tau_1)\mathcal{L}}\alpha\ee^{(d-\ell+1)\ii\omega  \tau_1}\mathcal{N}_{k_{\ell+1}}^{k_d}\ee^{\tau_1\mathcal{L}}\mathcal{F}_{\ell}\D \tau_1+ \int_{0}^{t} \ee^{(t-\tau_1)\mathcal{L}}\alpha\ee^{\ii\omega \tau_1} E^{d}_{n}(\tau_1)\D \tau_1\\
			&\stackrel{(3)}{=}&\sum_{|{\bf k^d}|=0 }^{n-d}\frac{1}{(\ii\omega)^{d+|{\bf k^d}|}}\sum_{k_{d+1}=0}^{n-d-|{\bf k^d}|-1}\frac{1}{(\ii\omega)^{k_{d+1}+1}}\sum_{\ell=0}^{d}a_{d-\ell}^{d}\frac{1}{(d-\ell+1)^{k_{d+1}+1}}\cdot\\
			&&\hspace{4cm}\cdot \Bigg(\ee^{(d-\ell+1)\ii\omega  t}\ad_\mathcal{L}^{k_{d+1}}\big(\alpha\mathcal{N}_{k_{\ell+1}}^{k_d}\big)\ee^{t\mathcal{L}}\mathcal{F}_\ell-\ee^{t\mathcal{L}}\ad_\mathcal{L}^{k_{d+1}}\big(\alpha\mathcal{N}_{k_{\ell+1}}^{k_d}\big)\mathcal{F}_\ell\Bigg)\\
			&+&\sum_{|{\bf k^d}|=0 }^{n-d}\frac{1}{(\ii\omega)^{d+|{\bf k^d}|}}\frac{1}{(\ii\omega)^{n-d-|{\bf k^d}|}}\sum_{\ell=0}^{d}a_{d-\ell}^{d}\frac{1}{(d-\ell+1)^{n-d-|{\bf k^d}|}}\int_{0}^{t}\ee^{(t-\tau_1)\mathcal{L}}\ad_\mathcal{L}^{n-d-|{\bf k^d}|}\big(\alpha\mathcal{N}_{k_{\ell+1}}^{k_d}
			\big)\ee^{\tau_1\mathcal{L}}\ee^{(d-\ell+1)\ii\omega\tau_1}\D \tau_1\mathcal{F}_\ell\\
			&&\quad + \int_{0}^{t} \ee^{(t-\tau_1)\mathcal{L}}\alpha\ee^{\ii\omega \tau_1} E^{d}_n(\tau_1)\D \tau_1\\
			&\stackrel{(4)}{=}&\sum_{|{\bf k^{d+1}}|=0}^{n-d-1}\frac{1}{(\ii\omega)^{d+1+|{\bf k^{d+1}}|}}\sum_{\ell=0}^{d}a_{d+1-\ell}^{d+1}\Bigg(\ee^{(d-\ell+1)\ii\omega t}\mathcal{N}_{k_{\ell+1}}^{k_{d+1}}\ee^{t\mathcal{L}}\mathcal{F}_\ell-\ee^{t\mathcal{L}}\mathcal{N}_{k_{\ell+1}}^{k_{d+1}}\mathcal{F}_\ell\Bigg)\\
			&&\quad +\frac{1}{(\ii\omega)^{n}}\sum_{|{\bf k^{d+1}}|=n-d }^{}\sum_{\ell=0}^{d}b_{d+1-\ell}^{d+1}\int_{0}^{t}\ee^{(t-\tau_1)\mathcal{L}}\mathcal{N}_{k_{\ell+1}}^{k_{d+1}}\ee^{\tau_1\mathcal{L}}\ee^{(d-\ell+1)\ii\omega\tau_1}\D \tau_1\mathcal{F}_\ell+ \int_{0}^{t} \ee^{(t-\tau_1)\mathcal{L}}\alpha\ee^{\ii\omega \tau_1} E^{d}_n(\tau_1)\D \tau_1\\
			&\stackrel{(5)}{=}&\sum_{|{\bf k^{d+1}}|=0}^{n-d-1}\frac{1}{(\ii\omega)^{d+1+|{\bf k^{d+1}}|}}\sum_{\ell=0}^{d+1}a_{d+1-\ell}^{d+1}\ee^{(d-\ell+1)\ii\omega t}\mathcal{N}_{k_{\ell+1}}^{k_{d+1}}\ee^{t\mathcal{L}}\mathcal{F}_\ell\\
			&&\quad +\frac{1}{(\ii\omega)^{n}}\sum_{|{\bf k^{d+1}}|=n-d }^{}\sum_{\ell=0}^{d}b_{d+1-\ell}^{d+1}\int_{0}^{t}\ee^{(t-\tau_1)\mathcal{L}}\mathcal{N}_{k_{\ell+1}}^{k_{d+1}}\ee^{\tau_1\mathcal{L}}\ee^{(d-\ell+1)\ii\omega\tau_1}\D \tau_1\mathcal{F}_\ell+ \int_{0}^{t} \ee^{(t-\tau_1)\mathcal{L}}\alpha E^{d}_{n}(\tau_1)\D \tau_1\\
			&\stackrel{(6)}{=}&\mathcal{S}^{d+1}_{n}(t)+ E^{d+1}_n(t)
		\end{eqnarray*}
		In (1) we exploit inductive assumption, in (2) formula for $\mathcal{S}^{d}_{n}(t)$, in (3) we refer to Lemma \ref{IA0BLemma} and perform integrating by parts $n-d-|{\bf k_d}|$ times. Equivalence (4) is valid due to Corollary  \ref{summation}, Definition \ref{N} and formulas (\ref{coef_a}), (\ref{coef_b})  where coefficients $a_j^\ell,\ b_j^\ell$ are defined.  Definition \ref{F} is utilised in (5), more specifically we observe that $\mathcal{F}_{d+1}=-\sum_{\ell=0}^{d}a_{d+1-\ell}^{d+1}\mathcal{N}_{\ell+1}^{d+1}\mathcal{F}_\ell$.  Expressions preceding equivalence (6) coincide with (\ref{needed_T}) which proves the first thesis of the theorem.
		
		To prove (\ref{T_estimate}) we start with the observation that there exists such a constant $\bar{C}_n^d(t^\star,\mathcal{L},u_0,\alpha)$ that
		$\|\mathcal{S}^{d}_n[\mathcal{L},\alpha]\|_Y\leq \omega^{-d}\bar{C}_n^d(t^\star,\mathcal{L},u_0,\alpha)$. By induction on $n$ we can easily prove that  there exists also such a constant $\tilde{C}_n^d(t^\star,\mathcal{L},u_0,\alpha)$ that
		$\|E^{d}_n[\mathcal{L},\alpha]\|_Y\leq \omega^{-n}\tilde{C}_n^d(t^\star,\mathcal{L},u_0,\alpha)$. Since $n\geq d$ we obtain
		$$
		\|T^d\ee^{t\mathcal{L}}u_0\|_Y=\| \mathcal{S}^{d}_n[\mathcal{L},\alpha](t)+ E^{d}_n[\mathcal{L},\alpha](t)\|_Y\leq \omega^{-d}\Big( \bar{C}_n^d(t^\star,\mathcal{L},u_0,\alpha)+\tilde{C}_n^d(t^\star,\mathcal{L},u_0,\alpha)\Big),
		$$
		which proves (\ref{T_estimate}) and completes the proof.
	\end{proof}
	
	\begin{remark}\label{oscillatory_integral}
		In a more general settings, but without specifying the terms of asymptotic series and error related to the truncation, multivariate highly oscillatory integrals were investigated for example in \cite{IN_2006}, \cite{S_2008}. Authors of these manuscripts shown that
		$$
		\int_{\Omega}^{}f(\textbf{y})\ee^{\ii\omega g(\textbf{y})} \D V \sim \mathcal{O}(\omega^{-d}),
		$$
		for $\Omega \subset \mathbb{R}^d$  having a piecewise smooth boundary where $\nabla g \notin\bar{\Omega}$ and $\nabla g \not\perp \partial{\Omega}$. Also the non-resonance condition is assumed.
	\end{remark}
	
	\begin{exam}
		To illustrate Theorem \ref{integral_AB} we will present the first three integrals, namely $T^1\ee^{t\mathcal{L}}$, $T^2\ee^{t\mathcal{L}}$ and $T^3\ee^{t\mathcal{L}}$. 
		
		\begin{eqnarray*}
			T^1\ee^{t\mathcal{L}}&=& \int_{0}^{t}\ee^{(t-\tau_1)\mathcal{L}}\alpha\ee^{\tau_1\mathcal{L}}\ee^{\ii\omega \tau_1}\D \tau_1 = \mathcal{S}^{1}_{n}(t)+ E^{1}_{n}(t)\\
			\mathcal{S}^{1}_{n}(t)&=& \sum_{k_1=0}^{n-1}\frac{1}{(\ii\omega)^{k_1+1}}\Big[\ee^{\ii\omega t}\ad_\mathcal{L}^{k_1}\big(\alpha\big)\ee^{t\mathcal{L}}+\ee^{t\mathcal{L}}\big(\underbrace{-\ad_\mathcal{L}^{k_1}\big(\alpha\big)}_{\mathcal{F}_1}\big)\Big]\\
			E^{1}_{n}(t)&=&\frac{1}{(\ii\omega)^{n}}\int_{0}^{t}\ee^{\ii\omega \tau_1}\ee^{(t-\tau_1)\mathcal{L}}\ad_\mathcal{L}^{n}\big(\alpha\big)\ee^{ \tau_1\mathcal{L}}\D t\tau_1\\
		\end{eqnarray*}

		\begin{eqnarray*}
			T^2\ee^{t\mathcal{L}}&=&\int_{0}^{t}\ee^{(t-\tau_1)\mathcal{L}}\alpha\ee^{\ii\omega \tau_1} \int_{0}^{\tau_1}\ee^{(\tau_1-\tau_2)\mathcal{L}}\alpha\ee^{\tau_2\mathcal{L}}\ee^{\ii\omega \tau_2}\D \tau_2\D \tau_1= \mathcal{S}^{2}_{n}(t)+ E^{2}_{n}(t)\\
			\mathcal{S}^{2}_n(t)&=&\sum_{|{\bf k^2}|=0 }^{n-2}\frac{1}{(\ii\omega)^{|{\bf k^2}|+2}}\Bigg[\frac{1}{2^{k_2+1}}\ee^{2\ii\omega t} \ad_\mathcal{L}^{k_2}\big(\alpha \ad_\mathcal{L}^{k_1}(\alpha)\big)\ee^{t\mathcal{L}}+\ee^{\ii\omega t}\ad_\mathcal{L}^{k_2}(\alpha)\ee^{t\mathcal{L}}\big(\underbrace{-\ad_\mathcal{L}^{k_1}(\alpha)}_{\mathcal{F}_1}\big)\\
			&&\hspace{2.4cm}+\ee^{t\mathcal{L}}\Big(\underbrace{-\frac{1}{2^{k_2+1}} \ad_\mathcal{L}^{k_2}\big(\alpha \ad_\mathcal{L}^{k_1}(\alpha)\big)+\ad_\mathcal{L}^{k_2}(\alpha)\ad_\mathcal{L}^{k_1}(\alpha)}_{\mathcal{F}_2}\Big)\Bigg]\\
			E^{2}_n(t)&=&\frac{1}{(\ii\omega)^{n}}\sum_{|{\bf k^2}|=n-1}^{}\Bigg[\frac{1}{2^{k_2}}\int_{0}^{t}\ee^{(t-\tau_1)\mathcal{L}}\ad_\mathcal{L}^{k_2}\big(\alpha \ad_\mathcal{L}^{k_1}(\alpha)\big)\ee^{\tau_1\mathcal{L}}\ee^{2\ii\omega \tau_1}\D \tau_1\\
			&&\hspace{2.4cm}+\int_{0}^{t}\ee^{(t-\tau_1)\mathcal{L}}\ad_\mathcal{L}^{k_2}(\alpha)\ee^{\tau_1\mathcal{L}}\Big(-\ad_\mathcal{L}^{k_1}(\alpha)\Big)\ee^{\ii\omega \tau_1}\D \tau_1\Bigg]\\
			&&+\int_{0}^{t}\ee^{(t-\tau_1)\mathcal{L}}\alpha\ee^{\ii\omega\tau_1}E^{1}_{n}(\tau_1)\D \tau_1
		\end{eqnarray*}

		\begin{eqnarray*}
			T^3\ee^{t\mathcal{L}}&=&\int_{0}^{t}\ee^{(t-\tau_1)\mathcal{L}}\alpha\ee^{\ii\omega \tau_1}\int_{0}^{\tau_1}\ee^{(\tau_1-\tau_2)\mathcal{L}}\alpha\ee^{\ii\omega \tau_2}\int_{0}^{\tau_2}\ee^{(\tau_2-\tau_3)\mathcal{L}}\alpha\ee^{\tau_3\mathcal{L}}\ee^{\ii\omega \tau_3}\D \tau_3 \D \tau_2  \D \tau_1= \mathcal{S}^{3}_{n}(t)+ E^{3}_n(t)\\
			\mathcal{S}^{3}_{n}(t)&=&\sum_{|{\bf k^3}| =0}^{n-3}\frac{1}{(\ii \omega)^{|{\bf k^3}|+3}}\Bigg(\frac{1 }{ 3^{k_3+1}\cdot 2^{k_2+1}} \ee^{3\ii\omega t}\ad_\mathcal{L}^{k_3}\big(\alpha \ad_\mathcal{L}^{k_2}(\alpha \ad_\mathcal{L}^{k_1}(\alpha))\big) \ee^{t\mathcal{L}}+\\
			&&\hspace{1cm} +\frac{1}{ 2^{k_3+1}}\ee^{2\ii\omega t}\ad_\mathcal{L}^{k_3}\big(\alpha \ad_\mathcal{L}^{k_2}(\alpha)\big)\ee^{t\mathcal{L}}\Big(\underbrace{- \ad_\mathcal{L}^{k_1}(\alpha)}_{\mathcal{F}_1}\Big)\\
			&&\hspace{1cm}+\ee^{\ii\omega t}\ad_\mathcal{L}^{k_3}(\alpha)\ee^{t\mathcal{L}}\Big(\underbrace{-\frac{1}{2^{k_2+1}} \ad_\mathcal{L}^{k_2}\big(\alpha \ad_\mathcal{L}^{k_1}(\alpha)\big)+ \ad_\mathcal{L}^{k_2} (\alpha(x))\ad_\mathcal{L}^{k_1}(\alpha)}_{\mathcal{F}_2}\Big)\\
			&&\hspace{1cm}+\ee^{t\mathcal{L}}\Big(\underbrace{\frac{-1 }{ 3^{k_3+1}\cdot 2^{k_2+1}} \ad_\mathcal{L}^{k_3}\big(\alpha \ad_\mathcal{L}^{k_2}(\alpha \ad_\mathcal{L}^{k_1}(\alpha))\big) +\frac{1}{ 2^{k_3+1}}\ad_\mathcal{L}^{k_3}\big(\alpha \ad_\mathcal{L}^{k_2}(\alpha)\big) \ad_\mathcal{L}^{k_1}(\alpha)}\\
			&&\hspace{1cm}+\underbrace{\frac{1}{2^{k_2+1}}\ad_\mathcal{L}^{k_3}(\alpha) \ad_\mathcal{L}^{k_2}\big(\alpha \ad_\mathcal{L}^{k_1}(\alpha)\big)- \ad_\mathcal{L}^{k_3} (\alpha)\ad_\mathcal{L}^{k_2} (\alpha)\ad_\mathcal{L}^{k_1}(\alpha)}_{\mathcal{F}_3}\Big)\Bigg)+\\
			E^{3}_{n}(t)&=&\frac{1}{(\ii\omega)^{n}}\sum_{|{\bf k^3}|=n-2}^{}\Bigg(\frac{1}{3^{k_3}}\frac{1}{2^{k_2+1}}\int_{0}^{t}\ee^{(t-\tau_1)\mathcal{L}}\ad_\mathcal{L}^{k_3}\big(\alpha \ad_\mathcal{L}^{k_2}(\alpha \ad_\mathcal{L}^{k_1}(\alpha))\big)\ee^{\tau_1\mathcal{L}} \ee^{3\ii\omega \tau_1}\D \tau_1\\
			&&+\frac{1}{2^{k_3}}\int_{0}^{t}\ee^{(t-\tau_1)\mathcal{L}}\ad_\mathcal{L}^{k_3}\big(\alpha \ad_\mathcal{L}^{k_2}(\alpha)\big)\ee^{\tau_1\mathcal{L}} \Big(-\ad_\mathcal{L}^{k_1}(\alpha)\Big)\ee^{2\ii\omega \tau_1} \D \tau_1\\
			&&+\int_{0}^{t}\ee^{(t-\tau_1)\mathcal{L}}\ad_\mathcal{L}^{k_3}(\alpha)\ee^{\tau_1\mathcal{L}}\ee^{\ii\omega \tau_1}\Big(-\frac{1}{2^{k_2+1}} \ad_\mathcal{L}^{k_2}\big(\alpha \ad_\mathcal{L}^{k_1}(\alpha)\big)+ \ad_\mathcal{L}^{k_2} (\alpha)\ad_\mathcal{L}^{k_1}(\alpha)\Big) \D \tau_1\Bigg)\\
			&&+\int_{0}^{t}\ee^{(t-\tau_1)\mathcal{L}}\alpha\ee^{\ii\omega \tau_1}E^{2}_{n}(\tau_1)\D \tau_1
		\end{eqnarray*}
	\end{exam}


	\section{The solution  as a modulated Fourier expansion}\label{Main_result}
	
	In this Section we will formulate the main result on the form of the asymptotic expansion for problem (\ref{eq:1.5}) and on the accuracy of its truncation. Assumption \ref{stronger_assumption} is sufficient for the derivation of the coefficients of magnitude $\omega^{-r}$, $r=0,\ldots,n$ and for the proof that the error $R_n(t)$ scales like $\omega^{-n}$. To obtain error $R_n(t)$ scaling like $\omega^{-(n+1)}$ a stronger regularity of the problem is required, because one more integration by parts will be necessary, so let us start with the formulation of the assumption.

	\begin{assumption}
		\label{strongest_assumption}
		We assume that problem (\ref{eq:1.5}) equipped with suitable boundary conditions is well posed and seek  solutions $u[t]:=u(\cdot,t) \in C^{n+1}([0,T ],H^{(n+1)p}(\Omega))$ while  $u_0\in H^{(n+1)p}(\Omega)$, where $n\in\mathbb{N}$ is a fixed number. Moreover we assume that the linear, differential operator  $\mathcal{L}: D(\mathcal{L}) \supseteq  H^{(n+1)p}(\Omega) \rightarrow L^2(\Omega)$ defined with (\ref{diff_operator}) is a generator of a differential (in strong operator topology) semigroup $e^{t\mathcal{L}}: L^2(\Omega)\rightarrow L^2(\Omega)$, that  
		$\alpha \in W^{(n+1)p,\infty}(\Omega)$ 
		and that the functions $ a_{\textbf{p}}\in C_b^\infty(\Omega)$, $|\textbf{p}|\leq p$, meaning that their all derivatives are bounded on $\Omega$
	\end{assumption}

	\begin{thm}\label{Uas}
		Let us make  Assumption \ref{strongest_assumption}.  The solution of  (\ref{eq:1.5}) can be presented as a modulated Fourier series of  form (\ref{partial_expansion}) where 
		\begin{equation}\label{coefficients_p}
			p_{0,0}(t) = \ee^{t\mathcal{L}}u_0,\quad 
			p_{r,s}(t)  =
			\sum_{d=s}^{r}\sum_{|{\bf k^{d}}|=r-d } \frac{a_s^d}{(\ii)^r}\mathcal{N}_{k_{d-s+1}}^{k_d}\ee^{t\mathcal{L}}\mathcal{F}_{d-s}u_0,\quad r\neq 0,\quad {\rm with} \quad \sum_{\mathbf{|k^0|}} = 0,
		\end{equation}
		and
		\begin{equation}\label{error_R}
			R_n(t)=\sum_{d=1}^{n}E^{d}_n[\mathcal{L},\alpha](t)+\sum_{d=n+1}^{\infty}\Big(\mathcal{S}^{d}_d[\mathcal{L},\alpha](t)+E^{d}_d[\mathcal{L},\alpha](t)\Big),
		\end{equation}
		with $S_n^d[\mathcal{L},\alpha](t)$ and $E_n^d[\mathcal{L},\alpha](t)$, $n\geq d$,  defined as (\ref{integral_series}) and (\ref{integral_series_error}), respectively. 
		Moreover $R_n(t)$ satisfies the following relation
		\begin{equation}\label{R_estimate}
			\|R_n\|_Y\leq\omega^{-(n+1)}C_R^n(t^\star,\mathcal{L},u_0,\alpha),
		\end{equation}
		were  $C_R^n(t^\star,\mathcal{L},u_0,\alpha$) is a constant depending on $n$, $t^\star$, operator $\mathcal{L}$, initial condition $u_0$ and function $\alpha$.
	\end{thm}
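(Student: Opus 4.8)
The plan is to combine the Neumann (Dyson) representation of the solution from Theorem \ref{Neumann_f(x,t)} with the per-integral asymptotic expansions from Theorem \ref{integral_AB}. Under Assumption \ref{strongest_assumption} the unique solution of (\ref{eq:1.5}) equals $u^\star[t]=\sum_{d=0}^\infty T^d\ee^{t\mathcal{L}}u_0$ (the operator $T$ being $K$ specialised to $f=\alpha\ee^{\ii\omega t}$), the $d=0$ term being exactly $p_{0,0}(t)=\ee^{t\mathcal{L}}u_0$. I would split this series at index $n$: for $1\le d\le n$ I invoke Theorem \ref{integral_AB} at the \emph{common} truncation level $n$, writing $T^d\ee^{t\mathcal{L}}u_0=\mathcal{S}^d_n(t)+E^d_n(t)$; for $d\ge n+1$, where the hypothesis $n\ge d$ fails, I use the \emph{diagonal} version $T^d\ee^{t\mathcal{L}}u_0=\mathcal{S}^d_d(t)+E^d_d(t)$. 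Grouping the resulting terms, the finite parts $\mathcal{S}^d_n$ with $1\le d\le n$ will constitute the displayed modulated Fourier sum, while everything else is collected into $R_n(t)$, which by construction is precisely (\ref{error_R}).

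The representation in the form (\ref{partial_expansion}) then reduces to a reindexing of $\sum_{d=1}^n\mathcal{S}^d_n(t)$. In (\ref{integral_series}) the power of $\omega^{-1}$ is $d+|{\bf k^d}|$ and the carried frequency is $(d-\ell)$, so I set $r=d+|{\bf k^d}|$ and $s=d-\ell$; then $a^d_{d-\ell}=a^d_s$, $\mathcal{N}^{k_d}_{k_{\ell+1}}=\mathcal{N}^{k_d}_{k_{d-s+1}}$, $\mathcal{F}_\ell=\mathcal{F}_{d-s}$ and $(\ii\omega)^{-(d+|{\bf k^d}|)}=\ii^{-r}\omega^{-r}$. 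Collecting all contributions with fixed $(r,s)$ — where $1\le r\le n$ and $0\le s\le r$, the constraint $d\ge s$ coming from $s=d-\ell\le d$ and the $d=0$ case being annihilated by the convention $\sum_{|{\bf k^0}|}=0$ — reproduces exactly the coefficient $p_{r,s}(t)$ of (\ref{coefficients_p}). Since $p_{r,s}\equiv 0$ for $s>r$, the inner sum may be extended to any upper index $N\ge n$, giving $\sum_{d=1}^n\mathcal{S}^d_n(t)=\sum_{r=1}^n\omega^{-r}\sum_{s=0}^N p_{r,s}(t)\ee^{\ii s\omega t}$, matching (\ref{partial_expansion}) with $R=n$. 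This part is purely combinatorial bookkeeping and carries no analytic difficulty.

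For the estimate (\ref{R_estimate}) I would bound the three pieces of $R_n$ separately. The tail $\sum_{d\ge n+1}\bigl(\mathcal{S}^d_d(t)+E^d_d(t)\bigr)=\sum_{d\ge n+1}T^d\ee^{t\mathcal{L}}u_0$ is controlled by (\ref{T_estimate}): each summand is $\mathcal{O}(\omega^{-d})$ and, thanks to the factorial decay already exhibited in (\ref{number_k}), the series $\sum_{d\ge n+1}\omega^{-d}C_T^d$ converges and is $\mathcal{O}(\omega^{-(n+1)})$. The genuinely analytic piece is $\sum_{d=1}^n E^d_n(t)$, for which Theorem \ref{integral_AB} yields only $\mathcal{O}(\omega^{-n})$; to upgrade this to $\mathcal{O}(\omega^{-(n+1)})$ I exploit the extra order of regularity granted by Assumption \ref{strongest_assumption} to perform one further integration by parts. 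Inspecting (\ref{integral_series_error}), every non-recursive integral carries the modulation $\ee^{(d-\ell)\ii\omega\tau_1}$ with $\ell\le d-1$, hence a \emph{nonzero} frequency $d-\ell\ge 1$; this non-resonance is exactly what allows Lemma \ref{IA0BLemma} (one more integration by parts) to extract an additional factor $\omega^{-1}$, turning the prefactor $(\ii\omega)^{-n}$ into an effective $\omega^{-(n+1)}$. The recursive term $\int_0^t\ee^{(t-\tau_1)\mathcal{L}}\alpha\ee^{\ii\omega\tau_1}E^{d-1}_n(\tau_1)\,\D\tau_1$ is then handled by induction on $d$ (base case $d=1$ from the explicit $E^1_n$), since $\|E^{d-1}_n\|_Y=\mathcal{O}(\omega^{-(n+1)})$ integrates trivially against a bounded kernel. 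Summing the $n$ such terms together with the tail gives $\|R_n\|_Y\le\omega^{-(n+1)}C_R^n$.

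I expect the main obstacle to be this last step: verifying that the non-resonance condition $d-\ell\ge 1$ holds for \emph{every} term produced in (\ref{integral_series_error}) (so that the extra integration by parts is always legitimate and never meets a stationary phase), and checking that the commutators $\ad_\mathcal{L}^{k}$ generated by that additional integration stay within the regularity budget $H^{(n+1)p}(\Omega)$, $W^{(n+1)p,\infty}(\Omega)$ supplied by Assumption \ref{strongest_assumption}. A secondary technical point is ensuring uniform (in $d$) control of the constants $C_T^d$, so that the infinite tail genuinely converges at rate $\omega^{-(n+1)}$ rather than merely term by term.
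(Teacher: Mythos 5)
Your proposal follows essentially the same route as the paper's proof: the same splitting of the Dyson series at index $n$ with the diagonal expansion $\mathcal{S}^d_d+E^d_d$ for $d\ge n+1$, the same reindexing $r=d+|{\bf k^d}|$, $s=d-\ell$ to recover $p_{r,s}$, and the same upgrade of $\sum_{d=1}^{n}E^d_n$ from $\mathcal{O}(\omega^{-n})$ to $\mathcal{O}(\omega^{-(n+1)})$ via one further integration by parts permitted by Assumption \ref{strongest_assumption} (the paper phrases this as replacing $E^d_n$ by $\omega^{-(n+1)}\sum_s S^d_{n+1,s}\ee^{s\ii\omega t}+E^d_{n+1}$). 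The only divergence is the tail: where you bound $\sum_{d\ge n+1}\omega^{-d}C_T^d$ term by term and must therefore worry about uniform control of the constants $C_T^d$, the paper instead writes the tail as $T^{n+1}\sum_{d\ge 0}T^d\ee^{t\mathcal{L}}u_0$ and bounds it by $\|T^{n+1}\|_Y\le\omega^{-(n+1)}C_T^{n+1}$ times the exponential constant already supplied by Theorem \ref{Neumann_f(x,t)}, which sidesteps the obstacle you flagged.
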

	
	\begin{proof} Based on the Theorem \ref{Neumann_f(x,t)} and its proof, see equations (\ref{u_star}) and (\ref{difference}), we conclude that for $f(x,t)=\alpha(x)\ee^{i\omega t}$ solution $u^\star$ of (\ref{eq:1.5}) satisfies
		\begin{equation}\label{solution}
			u^\star[t]=\sum_{d=0}^{n} T^d\ee^{t\mathcal{L}}u_0+T^{n+1}\sum_{d=0}^{\infty} T^d\ee^{t\mathcal{L}}u_0,
		\end{equation}
		where $T^d\ee^{t\mathcal{L}}u_0$ is defined with (\ref{T}). For the completeness of exposition let us rewrite the thesis of Theorem \ref{integral_AB}
		\begin{equation}\label{integral_AB_formula}
			T^d\ee^{t\mathcal{L}}u_0=\mathcal{S}_n^d[\mathcal{L},\alpha](t)+E_n^d[\mathcal{L},\alpha](t),
		\end{equation}
		and sort the summands of $\mathcal{S}_n^d[\mathcal{L},\alpha](t)$  according to their magnitude $\frac{1}{\omega^r}$ and frequency $\ee^{s\ii\omega t}$ as follows

		\begin{equation}\label{ordered}
			\mathcal{S}_n^d[\mathcal{L},\alpha](t)= \frac{1}{\omega^d}\sum_{s=0}^{d}S_{d,s}^d(t)\ee^{s\ii\omega t}u_0 +\frac{1}{\omega^{d+1}}\sum_{s=0}^{d}S_{d+1,s}^d(t)\ee^{s\ii\omega t}u_0  +  \ldots  +\frac{1}{\omega^n}\sum_{s=0}^{d}S_{n,s}^d(t)\ee^{s\ii\omega t}u_0,\quad d =1,\dots n.
		\end{equation}
		
		Based on Theorem \ref{integral_AB} we can conclude that expression $S_{r,s}^d$ is of the following form
		\begin{equation} \label{Srs}
			S_{r,s}^d(t)=\sum_{\mathbf{|k^d|} =r-d }\frac{a_s^d}{(\ii)^r}\mathcal{N}_{k_{d-s+1}^{}}^{k_{d}}\ee^{t\mathcal{L}}\mathcal{F}_{d-s}.
		\end{equation}

		Let us write the first $n$ oscillatory elements of expansion (\ref{solution}) in terms of (\ref{integral_AB_formula}) and (\ref{Srs}) in a form of a table
		
		\arraycolsep=1pt\def\arraystretch{2.2}
		$$
		\begin{array}{ccccccc}
			T^1\ee^{t\mathcal{L}}=  & \frac{1}{\omega}\sum_{s=0}^{1}S_{1,s}^1(t)\ee^{s\ii\omega t}& +\frac{1}{\omega^2}\sum_{s=0}^{1}S_{2,s}^1(t)\ee^{s\ii\omega t} & +\frac{1}{\omega^3}\sum_{s=0}^{1}S_{3,s}^1(t)\ee^{s\ii\omega t}+ & \ldots & +\frac{1}{\omega^n}\sum_{s=0}^{1}S_{n,s}^1(t)\ee^{s\ii\omega t}&+E^{1}_{n}(t)\\
			T^2\ee^{t\mathcal{L}}=&   & \quad \frac{1}{\omega^2}\sum_{s=0}^{2}S_{2,s}^2(t)\ee^{s\ii\omega t}&   +\frac{1}{\omega^3}\sum_{s=0}^{2}S_{3,s}^2(t)\ee^{s\ii\omega t} +& \ldots & +\frac{1}{\omega^n}\sum_{s=0}^{2}S_{n,s}^2(t)\ee^{s\ii\omega t}&+E^{2}_{n}(t) \\ 
			T^3\ee^{t\mathcal{L}}= &  & & \quad\frac{1}{\omega^3}\sum_{s=0}^{3}S_{3,s}^3(t)\ee^{s\ii\omega t+} & \ldots& +\frac{1}{\omega^n}\sum_{s=0}^{3}S_{n,s}^3(t)\ee^{s\ii\omega t}&+E^{3}_{n}(t)  \\
			\vdots &   &  &   & \ddots& \vdots \\
			T^{n}\ee^{t\mathcal{L}}=& &  &   &  & \quad  \frac{1}{\omega^n}\sum_{s=0}^{n}S_{n,s}^n(t)\ee^{s\ii\omega t}&+E^{n}_{n}(t) \\  
		\end{array}
		$$
		Summing up $r$-th column of the above table we obtain
		$$\frac{1}{\omega^r}\sum_{d=0}^{r} \sum_{s=0}^{d}S_{r,s}^d\ee^{s\ii\omega t}=\frac{1}{\omega^r}\sum_{s=0}^{r}\ee^{s\ii\omega t}\sum_{d=s}^{r}S_{r,s}^d,\quad {\rm where} \quad S_{r,s}^0(t)=0.
		$$

		Hence, based also on (\ref{integral_AB_formula}) and (\ref{ordered}), equation (\ref{solution}) is equivalent to
		\begin{equation}\label{asymptotic_series_p}
			\begin{split}
			&	u^\star[t]\\
				&=\ee^{t\mathcal{L}}u_0+\sum_{r=0}^{n}\frac{1}{\omega^r}\sum_{s=0}^{r}\ee^{s\ii\omega t}\sum_{d=s}^{r}S_{r,s}^d u_0 +\sum_{d=1}^nE^{d}_nu_0+\sum_{d=n+1}^\infty T^{d}\ee^{t\mathcal{L}}u_0\\
				&=\ee^{t\mathcal{L}}u_0 + \sum_{r=1}^{n}\frac{1}{\omega^r}\sum_{s=0}^{r}\ee^{s\ii\omega t}\sum_{d=s}^{r} \sum_{\mathbf{|k^d|} =r-d }\frac{a_s^d}{(\ii)^r}\mathcal{N}_{k_{d-s+1}^{}}^{k_{d}}\ee^{t\mathcal{L}}\mathcal{F}_{d-s}u_0 +\sum_{d=1}^nE^{d}_nu_0  +\sum_{d=n+1}^{\infty}\Big(\mathcal{S}^{d}_d[\mathcal{L},\alpha](t)+E^{d}_d[\mathcal{L},\alpha](t)\Big)\\ 
				&= \ee^{t\mathcal{L}}u_0 + \sum_{r=0}^{n}\frac{1}{\omega^r}\sum_{s=0}^{r}\ee^{s\ii\omega t}p_{r,s}(t)u_0+R_n(t)\\ 
			\end{split}
		\end{equation}
		
		where coefficients $p_{r,s}(t)$ and error $R_n(t)$ coincide with (\ref{coefficients_p}) and (\ref{error_R}), respectively, which proves the first part of the thesis.  
		To prove estimate (\ref{R_estimate}) let us observe that
		$$
		R_n(t)=\sum_{d=1}^nE^{d}_nu_0+T^{n+1}\sum_{d=0}^\infty T^{d}\ee^{t\mathcal{L}}u_0.
		$$
		As it was pointed out in the proof of Theorem \ref{integral_AB} $\|E^{d}_n[\mathcal{L},\alpha]\|_Y\leq \omega^{-n}\tilde{C}_n^d(t^\star,\mathcal{L},u_0,\alpha)$ which does not suffice for the desired estimate of $R_n(t)$. Assumption \ref{strongest_assumption} allows for a longer expansion of $T^d$
		\begin{equation}\nonumber
			T^d\ee^{t\mathcal{L}}u_0=\mathcal{S}_{n+1}^d[\mathcal{L},\alpha](t)+E_{n+1}^d[\mathcal{L},\alpha](t),
		\end{equation}
		where $\mathcal{S}_{n+1}^d[\mathcal{L},\alpha](t)$ and $E_{n+1}^d[\mathcal{L},\alpha](t)$ are defined as previously which means that $E^{r}_{n}(t)$ can be replaced with a sum $\frac{1}{\omega^{n+1}}\sum_{s=0}^{1}S_{n+1,s}^d(t)\ee^{s\ii\omega t}+E^{d}_{n+1}(t)$ for $d=1,\ldots n$ which is bounded in $Y$ norm by $\omega^{-(n+1)}C_E^d(t^\star,\mathcal{L},u_0,\alpha)$. 
		Based on Theorem \ref{Neumann_f(x,t)} we claim that
		$$
		\|T^{n+1}\sum_{d=0}^\infty T^{d}\ee^{t\mathcal{L}}u_0\|_Y\leq C_{t^\star}\|u_0\|_{L^2(\Omega)}\exp\Big(C_{t^\star}  \|\alpha\|_{L^2(\Omega)} {t^\star}\Big) \left\|T^{n+1}\right\|_Y,  
		$$
		where $ C_{t^\star}=\max_{0\leq t\leq {t^\star}}\|e^{t\mathcal{L}}\|_{L^2(\Omega)} $,
		while from Theorem \ref{integral_AB} we know that $$\|T^{n+1}\|_Y\leq\omega^{-(n+1)}C_T^{n+1}(t^\star,\mathcal{L},u_0,\alpha).$$ Altogether we get that
		$$
		\|R_n\|_Y \leq \omega^{-(n+1)}\Big( \sum_{d=1}^n C_E^d(t^\star,\mathcal{L},u_0,\alpha) + C_{t^\star}\|u_0\|_{L^2(\Omega)}\exp\Big(C_{t^\star}  \|\alpha\|_{L^2(\Omega)} {t^\star}C_T^{n+1}(t^\star,\mathcal{L},u_0,\alpha)\Big), 
		$$
		which completes the proof.
	\end{proof}
	
	\begin{exam}
		The first few coefficients of the derived example are given with the following formulas 
		\begin{eqnarray*}
			&&p_{0,0}(t) = \ee^{t\mathcal{L}}u_0,\\
			&& p_{1,0}(t) = -\frac{1}{\ii}\ee^{t\mathcal{L}}\alpha u_0,\\
			&&p_{1,1}(t) = \frac{1}{\ii}\alpha\ee^{t\mathcal{L}}u_0,\\
			&&p_{2,0}(t) = -\frac{1}{2}\ee^{t\mathcal{L}}\alpha^2u_0+\ee^{t\mathcal{L}}\ad_\mathcal{L}^1\left(\alpha\right)u_0,\\
			&&p_{2,1}(t) = -\ad_\mathcal{L}^1\left(\alpha\right)\ee^{t\mathcal{L}}u_0+\alpha\ee^{t\mathcal{L}}\alpha u_0,\\
			&& p_{2,2}(t) = -\frac{1}{2}\alpha^2\ee^{t\mathcal{L}}u_0;\\
			&&p_{3,0}(t) =  \ii\ee^{t\mathcal{L}}\left(-\ad_\mathcal{L}^2(\alpha)-\frac{1}{4}\ad_\mathcal{L}^1(\alpha^2)+\ad_\mathcal{L}^1(\alpha)\alpha+\frac{1}{2}\alpha \ad_\mathcal{L}^1(\alpha)-\frac{1}{6}\alpha^3\right)u_0,\\
			&& p_{3,1}(t) = \ii\left(\ad_\mathcal{L}^2(\alpha)\ee^{t\mathcal{L}} -\ad_\mathcal{L}^1(\alpha)\ee^{t\mathcal{L}}\alpha-\alpha\ee^{t\mathcal{L}}\ad_\mathcal{L}^1(\alpha)+\frac{1}{2}\alpha\ee^{t\mathcal{L}}\alpha^2\right)u_0,\\
			&& p_{3,2}(t) =\ii\left( \frac{1}{4}\ad_\mathcal{L}^1(\alpha^2)\ee^{t\mathcal{L}}+\frac{1}{2}\alpha \ad_\mathcal{L}^1(\alpha)\ee^{t\mathcal{L}}-\frac{1}{2}\alpha^2\ee^{t\mathcal{L}}\alpha\right)u_0,\\
			&& p_{3,3}(t) = \ii\frac{1}{6}\alpha^3\ee^{t\mathcal{L}}u_0.
		\end{eqnarray*}
	\end{exam}
	
	\section{Numerical examples}\label{Numerics}
	
	To illustrate the proposed approach we  analyse two problems  chosen in such a way, that their exact solutions are known. This way we present the exact errors between the analytical solutions $u^\star[t]$ and their first four approximations
	\begin{equation} \label{asymptotyczne}
		U^{[n]}:=  p_{0,0}(t) +  \sum_{r=1}^{n}\frac{1}{\omega^r}\sum_{s=0}^{r}\ee^{s\ii \omega t}p_{r,s}(t), \quad n=0,\ldots,3, \quad t\in[0,1],
	\end{equation}

	\noindent
	
	\textit{Example 1.} \\
	
	Let us start with damped heat equation equipped with initial--boundary conditions
	\begin{eqnarray}\label{heat} \nonumber
		&&\partial_tu(x,t) = \Delta u(x,t)+\ee^{\ii\omega t}\cdot u(x,t), \quad x\in [0, \pi], \quad t \in [0,1],\\
		&& u(x,0) = u_0(x) = \sin(x)\cdot\exp\left(-\frac{\ii}{\omega}\right),\\
		&& u(0,t)= u(\pi,t)=0, \nonumber
	\end{eqnarray}
	with known solution
	$$
	u^\star_1[t](x)=  \ee^{-t-\ii\ee^{\ii\omega t}/\omega}\sin(x).
	$$
	
		\begin{figure}[hbt!]
		\centering
		\begin{subfigure}[b]{0.450\textwidth}
			\centering
			\includegraphics[width=\textwidth]{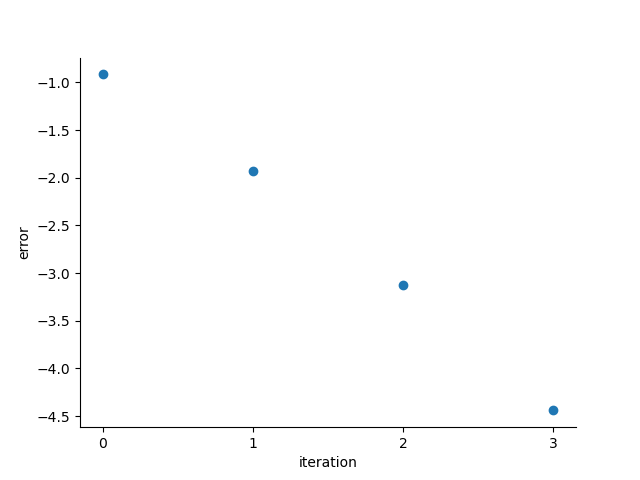}
			\caption{$\omega = 10$}
			\label{fig:omega=10}
		\end{subfigure}
		\hfill
		\begin{subfigure}[b]{0.450\textwidth}
			\centering
			\includegraphics[width=\textwidth]{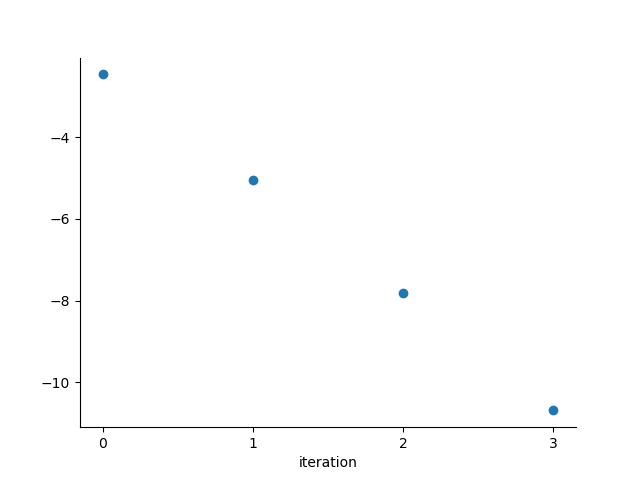}
			\caption{$\omega = 100$}
			\label{fig:omega=100}
		\end{subfigure}
		\hfill
		\caption{The base-10 logarithmic errors $R^{[n]}_1=\|u^\star_1-U^{[n]}_1\|_Y$, $n=0,\ldots,3$ obtained for the heat equation (\ref{heat}).}
		\label{fig:three graphs}
	\end{figure}
	
	Let us observe that the initial condition $u_0(x)$ and function $\alpha(x)$  do not affect error constant. Indeed, $u_0$ is non-oscillatory while $\alpha$ is constant - their derivatives are small and vanish, respectively. Thus, according to the expectations the error decays as the number of summands in (\ref{asymptotyczne}) grows. Moreover we can see that the method improves with the grow of the oscillatory parameter $\omega$. This phenomena is presented in Figure \ref{fig:omega=10} - \ref{fig:omega=100}, where errors $R^{[n]}_1:=\|u^\star_1-U^{[n]}_1\|_Y$, $n=0,\ldots,3$ are displayed in logarithmic scale.

	\newpage
	\noindent
	\textit{Example 2.} \\
	
	Let us consider now transport equation
	\begin{eqnarray}\label{transport}
		&&\partial_tu(x,t) = \nabla u(x,t)+\cos(c(x^2+x))\cdot \ee^{\ii\omega t}\cdot u(x,t),  \quad c={\rm const}, \quad x\in [-1,1], \quad t \in [0,1],\\
		&& u(x,0) = u_0(x) = x \nonumber,
	\end{eqnarray}
	with known solution
	\begin{eqnarray*}
		&&u^\star_2[t](x)= (t+x) \cdot\\
		&&  \cdot \exp \left(\frac{(-1)^{3/4} \sqrt{\pi } \exp (f_1(x,t)) \left(e^{\frac{i \left(c^2+\omega ^2\right)}{2 c}} \left(\text{erf}\left(g_1(x)\right)-\text{erf}\left(f_2(x,t)\right)\right)-\text{erfi}\left(f_3(x,t)\right)+\text{erfi}\left(g_2(x)\right)\right)}{4 \sqrt{c}}\right)
	\end{eqnarray*}
	where
	\begin{eqnarray*}
		f_1(x,t)& =& -\frac{i \left(c^2-2 c \omega  (2 t+2 x+1)+\omega ^2\right)}{4 c},\\
		g_1(x)& =&\frac{\sqrt[4]{-1} (2 c x+c+\omega )}{2 \sqrt{c}},\\
		f_2(x,t)&=&\frac{\sqrt[4]{-1} (2 c t+2 c x+c+\omega )}{2 \sqrt{c}},\\
		f_3(x,t)&=&\frac{\sqrt[4]{-1} (2 c (t+x)+c-\omega )}{2 \sqrt{c}},\\
		g_2(x)&=&\frac{\sqrt[4]{-1} (2 c x+c-\omega )}{2 \sqrt{c}},
	\end{eqnarray*}
	and  $\text{erf}(x)$,  $\text{erfi}(x)$ are the error function and the imaginary error function respectively, i.e $\text{erf}(x)= \frac{2}{\sqrt{\pi}}\int_0^x \ee^{-x^2}\D x$, $\text{erfi}(x) = -\ii\text{erf}(\ii x)$.
	In this example we wish to show how function $\alpha(x) = \cos(c(x^2+x))$ may affect the error constant. In the non-oscillatory case of $\alpha$, where $c=1$, the error  decays with number $n$ in (\ref{asymptotyczne}), see Figure \ref{fig:c=1}. An interesting situation takes place when $\alpha$ becomes oscillatory, for example $c=31$. Derivatives of $\alpha$ grow fast resulting in   $R^{[2]}_2>\omega^{-2}$ and $R^{[3]}_2>\omega^{-3}$, what can be observed in Figure  \ref{fig:c=31}.

	\begin{figure}[hbt!]
		\centering
		\begin{subfigure}[b]{0.400\textwidth}
			\centering
			\includegraphics[width=\textwidth]{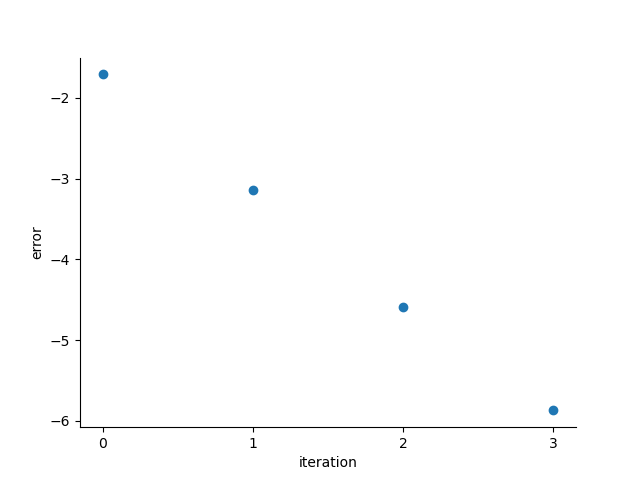}
			\caption{$c=1, \quad \omega = 100$}
			\label{fig:c=1}
		\end{subfigure}
		\begin{subfigure}[b]{0.400\textwidth}
			\centering
			\includegraphics[width=\textwidth]{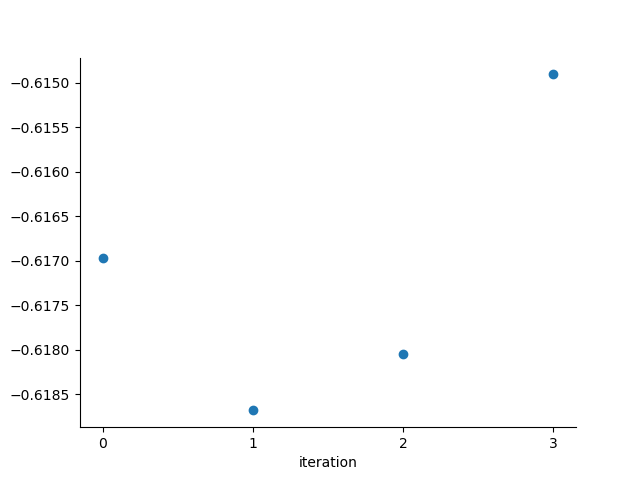}
			\caption{$c=31, \quad \omega = 100$}
			\label{fig:c=31}
		\end{subfigure}
		\hfill
		\caption{The base-10 logarithmic errors $R^{[n]}_2=\|u^\star_2-U^{[n]}_2\|_Y$, $n=0,\ldots,3$ obtained for the heat equation (\ref{transport})}
		\label{fig:three grap hs}
	\end{figure}
	
	In the  table below we present first few functions $p_{r,n}$ of both examples.
	\renewcommand{\arraystretch}{1.2}
	\begin{small}
		\begin{center}
			\begin{table} [hbt!] 
				\centering
				\begin{tabular}{ c| c| c}  
					$p_{r,s}$ & $\mathcal{L}= \nabla$, $\alpha(x) = \cos(c(x^2+x)), \  u_0(x) = x$ & $\mathcal{L}= \Delta$, $\alpha(x)\equiv 1$, $u_0(x) = \sin(x)\ee^{-\ii/\omega}$\\ 
					\hline 
					$p_{0,0}$ & $u_0(x+t)$ & $\ee^{-t}\sin(x)\ee^{-\ii/\omega}$  \\  
					$p_{1,0}$ & $-\frac{1}{\ii}\alpha(x+t)u_0(x+t)$ &     $-\frac{1}{\ii}\ee^{-t}\sin(x)\ee^{-\ii/\omega}$ \\
					$	p_{1,1}$&$\frac{1}{\ii}\alpha(x)u_0(x+t)$ &  $\frac{1}{\ii}\ee^{-t}\sin(x)\ee^{-\ii/\omega}$ \\
					$	p_{2,0}$& $-\frac{1}{2}\alpha^2(x+t)u_0(x+t)+\alpha'(x+t) u_0(x+t)$& $-\frac{1}{2}\ee^{-t}\sin(x)\ee^{-\ii/\omega}$  \\
					$	p_{2,1}$& $-u_0(x+t)\alpha'(x)+\alpha(x)\alpha(x+t)u_0(x+t) $& $\ee^{-t}\sin(x)\ee^{-\ii/\omega}$ \\
					$	p_{2,2}$& $-\frac{1}{2}\alpha^2(x)u_0(x+t)$& $-\frac{1}{2}\ee^{-t}\sin(x)\ee^{-\ii/\omega}$  \\
				\end{tabular}
				\captionof{table}{Coefficients $p_{r,n}$ of asymptotic expansion of Example 1. and Example 2.}
			\end{table}
		\end{center}\label{Tab:coefficients} 
	\end{small}

		\section{Acknowledgement}
		
		We would like to thank Arieh Iserles from the University of Cambridge and Marcus Webb from the University of Manchester  for numerous stimulating discussions which were very helpful and inspirational. \\
		
		\noindent
		We wish also to thank Anna Kamont from Institute of Mathematics Polish Academy of Sciences for her support, deep observations and rigorous approach to the contents of this paper, Jakub Skrzeczkowski from  University of Warsaw for fruitful discussions about semi-group theory and  Karolina Lademann from the University of Gda\'{n}sk for contribution to the performance of numerical examples.\\
		
		\noindent
		The work of KK in this project was supported by The National Center for Science (NCN), based on Grant No. 2019/34/E/ST1/00390. This work was also partially supported by the Simons Foundation Award No 663281 granted to the Institute of Mathematics of the Polish Academy of Sciences for the years 2021-2023. Numerical  simulations were carried out  at the Centre of Informatics Tricity Academic Supercomputer and networK (CI TASK)  in Gda\'{n}sk.

\bibliographystyle{unsrt}
\bibliography{KKRP}

\begin{thebibliography}{13}
\expandafter\ifx\csname natexlab\endcsname\relax\def\natexlab#1{#1}\fi
\providecommand{\url}[1]{\texttt{#1}}
\providecommand{\href}[2]{#2}
\providecommand{\path}[1]{#1}
\providecommand{\DOIprefix}{doi:}
\providecommand{\ArXivprefix}{arXiv:}
\providecommand{\URLprefix}{URL: }
\providecommand{\Pubmedprefix}{pmid:}
\providecommand{\doi}[1]{\href{http://dx.doi.org/#1}{\path{#1}}}
\providecommand{\Pubmed}[1]{\href{pmid:#1}{\path{#1}}}
\providecommand{\bibinfo}[2]{#2}
\ifx\xfnm\relax \def\xfnm[#1]{\unskip,\space#1}\fi
\bibitem[{Condon et~al.(2009)Condon, Dea\~{n}o, and Iserles}]{CDI_2009}
\bibinfo{author}{M.~Condon}, \bibinfo{author}{A.~Dea\~{n}o},
  \bibinfo{author}{A.~Iserles},
\newblock \bibinfo{title}{On highly oscillatory problems arising in electronic
  engineering},
\newblock \bibinfo{journal}{M2AN Math. Model. Numer. Anal.}
  \bibinfo{volume}{43} (\bibinfo{year}{2009}) \bibinfo{pages}{785--804}.
  \URLprefix \url{https://doi.org/10.1051/m2an/2009024}.
  \DOIprefix\doi{10.1051/m2an/2009024}.
\bibitem[{Condon et~al.(2010)Condon, Dea\~{n}o, and Iserles}]{CDI_2010}
\bibinfo{author}{M.~Condon}, \bibinfo{author}{A.~Dea\~{n}o},
  \bibinfo{author}{A.~Iserles},
\newblock \bibinfo{title}{On systems of differential equations with extrinsic
  oscillation},
\newblock \bibinfo{journal}{Discrete Contin. Dyn. Syst.} \bibinfo{volume}{28}
  (\bibinfo{year}{2010}) \bibinfo{pages}{1345--1367}. \URLprefix
  \url{https://doi.org/10.3934/dcds.2010.28.1345}.
  \DOIprefix\doi{10.3934/dcds.2010.28.1345}.
\bibitem[{Condon et~al.(2012)Condon, Dea\~{n}o, Iserles, and
  Kropielnicka}]{CDIK_2012}
\bibinfo{author}{M.~Condon}, \bibinfo{author}{A.~Dea\~{n}o},
  \bibinfo{author}{A.~Iserles}, \bibinfo{author}{K.~Kropielnicka},
\newblock \bibinfo{title}{Efficient computation of delay differential equations
  with highly oscillatory terms},
\newblock \bibinfo{journal}{ESAIM Math. Model. Numer. Anal.}
  \bibinfo{volume}{46} (\bibinfo{year}{2012}) \bibinfo{pages}{1407--1420}.
  \URLprefix \url{https://doi.org/10.1051/m2an/2012004}.
  \DOIprefix\doi{10.1051/m2an/2012004}.
\bibitem[{Faou and Schratz(2014)}]{FS_2014}
\bibinfo{author}{E.~Faou}, \bibinfo{author}{K.~Schratz},
\newblock \bibinfo{title}{Asymptotic preserving schemes for the
  {K}lein-{G}ordon equation in the non-relativistic limit regime},
\newblock \bibinfo{journal}{Numer. Math.} \bibinfo{volume}{126}
  (\bibinfo{year}{2014}) \bibinfo{pages}{441--469}. \URLprefix
  \url{https://doi.org/10.1007/s00211-013-0567-z}.
  \DOIprefix\doi{10.1007/s00211-013-0567-z}.
\bibitem[{Condon et~al.(2019)Condon, Iserles, Kropielnicka, and
  Singh}]{CIKS_2019}
\bibinfo{author}{M.~Condon}, \bibinfo{author}{A.~Iserles},
  \bibinfo{author}{K.~Kropielnicka}, \bibinfo{author}{P.~Singh},
\newblock \bibinfo{title}{Solving the wave equation with multifrequency
  oscillations},
\newblock \bibinfo{journal}{J. Comput. Dyn.} \bibinfo{volume}{6}
  (\bibinfo{year}{2019}) \bibinfo{pages}{239--249}. \URLprefix
  \url{https://doi.org/10.3934/jcd.2019012}.
  \DOIprefix\doi{10.3934/jcd.2019012}.
\bibitem[{Condon et~al.(2021)Condon, Kropielnicka, Lademann, and
  Perczy\'{n}ski}]{CKLP_2021}
\bibinfo{author}{M.~Condon}, \bibinfo{author}{K.~Kropielnicka},
  \bibinfo{author}{K.~Lademann}, \bibinfo{author}{R.~Perczy\'{n}ski},
\newblock \bibinfo{title}{Asymptotic numerical solver for the linear
  {K}lein-{G}ordon equation with space- and time-dependent mass},
\newblock \bibinfo{journal}{Appl. Math. Lett.} \bibinfo{volume}{115}
  (\bibinfo{year}{2021}) \bibinfo{pages}{Paper No. 106935, 7}. \URLprefix
  \url{https://doi.org/10.1016/j.aml.2020.106935}.
  \DOIprefix\doi{10.1016/j.aml.2020.106935}.
\bibitem[{Hairer et~al.(2010)Hairer, Lubich, and Wanner}]{HLW_2006}
\bibinfo{author}{E.~Hairer}, \bibinfo{author}{C.~Lubich},
  \bibinfo{author}{G.~Wanner}, \bibinfo{title}{Geometric numerical
  integration}, volume~\bibinfo{volume}{31} of
  \textit{\bibinfo{series}{Springer Series in Computational Mathematics}},
  \bibinfo{publisher}{Springer, Heidelberg}, \bibinfo{year}{2010}.
  \bibinfo{note}{Structure-preserving algorithms for ordinary differential
  equations, Reprint of the second (2006) edition}.
\bibitem[{Ernst and Christian(2001)}]{HCH_2001}
\bibinfo{author}{H.~Ernst}, \bibinfo{author}{L.~Christian},
\newblock \bibinfo{title}{Long-time energy conservation of numeri- cal methods
  for oscillatory differential equations},
\newblock \bibinfo{journal}{SIAM J. Numer. Anal.} \bibinfo{volume}{38}
  (\bibinfo{year}{2001}) \bibinfo{pages}{414--441}. \URLprefix
  \url{https://www.jstor.org/stable/3062063}.
\bibitem[{Gauckler et~al.(2018)Gauckler, Hairer, and Lubich}]{GHCH_18}
\bibinfo{author}{L.~Gauckler}, \bibinfo{author}{E.~Hairer},
  \bibinfo{author}{C.~Lubich},
\newblock \bibinfo{title}{Dynamics, numerical analysis, and some geometry},
\newblock in: \bibinfo{booktitle}{Proceedings of the {I}nternational {C}ongress
  of {M}athematicians---{R}io de {J}aneiro 2018. {V}ol. {I}. {P}lenary
  lectures}, \bibinfo{publisher}{World Sci. Publ., Hackensack, NJ},
  \bibinfo{year}{2018}, pp. \bibinfo{pages}{453--485}.
\bibitem[{Wang and Zhao(2021)}]{WZ_21}
\bibinfo{author}{B.~Wang}, \bibinfo{author}{X.~Zhao},
\newblock \bibinfo{title}{Error estimates of some splitting schemes for
  charged-particle dynamics under strong magnetic field},
\newblock \bibinfo{journal}{SIAM J. Numer. Anal.} \bibinfo{volume}{59}
  (\bibinfo{year}{2021}) \bibinfo{pages}{2075--2105}. \URLprefix
  \url{https://doi.org/10.1137/20M1340101}. \DOIprefix\doi{10.1137/20M1340101}.
\bibitem[{Wang and Zhao(2023)}]{WZ_23}
\bibinfo{author}{B.~Wang}, \bibinfo{author}{X.~Zhao},
\newblock \bibinfo{title}{Geometric two-scale integrators for highly
  oscillatory system: uniform accuracy and near conservations},
\newblock \bibinfo{journal}{SIAM J. Numer. Anal.} \bibinfo{volume}{61}
  (\bibinfo{year}{2023}) \bibinfo{pages}{1246--1277}. \URLprefix
  \url{https://doi.org/10.1137/21M1462908}. \DOIprefix\doi{10.1137/21M1462908}.
\bibitem[{Iserles and Norsett(2006)}]{IN_2006}
\bibinfo{author}{A.~Iserles}, \bibinfo{author}{S.~Norsett},
\newblock \bibinfo{title}{Quadrature methods for multivariate highly
  oscillatory integrals using derivatives},
\newblock \bibinfo{journal}{Math. Comput.} \bibinfo{volume}{75}
  (\bibinfo{year}{2006}) \bibinfo{pages}{1233--1258}.
  \DOIprefix\doi{10.1090/S0025-5718-06-01854-0}.
\bibitem[{Olver(2008)}]{S_2008}
\bibinfo{author}{S.~Olver}, \bibinfo{title}{Numerical approximation of highly
  oscillatory integrals}, Ph.D. thesis, University of Cambridge Cambridge,
  \bibinfo{year}{2008}.

\end{thebibliography}

\end{document}